\documentclass[11pt]{amsart}
\usepackage{amssymb}
\usepackage{amscd}
\usepackage{color}
\usepackage{comment}
\usepackage[dvipsnames]{xcolor}
\usepackage{url}
\usepackage{hyperref}
\usepackage{enumitem}

\allowdisplaybreaks[4]

 \newtheorem{thm}{Theorem}[section]
 \newtheorem*{theorem*}{Theorem}
 \newtheorem{cor}[thm]{Corollary}
 \newtheorem{lem}[thm]{Lemma}
 
 \newtheorem{prop}[thm]{Proposition}
 \newtheorem{proposition}[thm]{Proposition}
 \theoremstyle{definition}
 
\theoremstyle{remark}
 \newtheorem{rem}[thm]{Remark}
\newtheorem{remark}{Remark}
\newtheorem{exm}[thm]{Example}
 \numberwithin{equation}{section}

\newcommand{\ol}{\overline}

\newcommand{\re}{\mathop{\rm Re}\nolimits}

{\par \hfill }

\def\CC{{\mathbb C}}
\def\DD{{\mathbb D}}

\def\LL{\mathcal L}
\def\EE{\mathcal E}
\def\NN{\mathbb N}

\def\beq{\begin{equation}}
\def\eeq{\end{equation}}

\def\spam{\mathop{\rm span}\nolimits}
\def\rang{\mathop{\rm ran}\nolimits}
\def\phi{\varphi}

\newcommand{\uu}{\textnormal{\textbf{u}} }

\newcommand{\norm}[1]{\left\|#1\right\|}
\newcommand{\ip}[2]{\left\langle #1,#2\right\rangle}

\title{Frame constructions associated with operator orbits}

\author{Eva A. Gallardo-Guti\'{e}rrez}
\address{Eva A. Gallardo-Guti\'errez \newline
Departamento de An\'alisis Matem\'atico y Matem\'atica Aplicada,\newline
Facultad de Matem\'aticas,
\newline Universidad Complutense de
Madrid, \newline
Plaza de Ciencias 3, 28040 Madrid,  Spain
\newline
and Instituto de Ciencias Matem\'aticas ICMAT,
\newline Madrid,  Spain }
\email{eva.gallardo@mat.ucm.es}

\author{Jonathan R. Partington}
\address{Jonathan R. Partington, \newline
School of Mathematics, \newline
University of Leeds, \newline
Leeds LS2 9JT, United Kingdom}
\email{J.R.Partington@leeds.ac.uk}

\thanks{Both authors are partially supported by Plan Nacional  I+D grant no. PID2022-137294NB-I00, \& PID2025-169474NB-C21, Spain. The first author is also supported by
 the Spanish Ministry of Science and Innovation, through the ``Severo Ochoa Programme for Centres of Excellence in R\&D'' CEX2023-001347.}

\subjclass[2020]{46B15,30H10}

\date{August 2026}

\keywords{frame, operator orbit, Carleson interpolation, weighted composition operator, Beurling--Lax theory, M\"untz--Szász  condition}

\begin{document}

\begin{abstract}
This paper studies frames in Hilbert spaces generated by the orbits of (in)-finitely many vectors under a single operator, presenting new results on multiplication operators and operators composed of Jordan blocks, which generalizes existing results of Cabrelli, Molter, Paternostro and Philipp  by means of techniques which deal with weighted interpolation, weighted composition operators, and Beurling--Lax theory related to shifts of infinity multiplicity.
Likewise, we discuss Carleson frames and give counterexamples to a recent conjecture of Aldroubi, Cabrelli, Krishtal and Molter.

\end{abstract}

\maketitle

\section{Introduction}
A sequence $(x_n)_{n \ge 0}$  in a infinite dimensional, separable, complex Hilbert space $H$ is said to be a {\em frame\/} if
there are constants $C_1,C_2 > 0$ such that
\[
C_1 \|x\|^2 \le \sum_{n=0}^\infty |\langle x,x_n\rangle|^2 \le C_2 \|x\|^2
\]
for all $x \in H$.
Frames extend the notion of an orthonormal basis: while a basis provides a unique expansion, a frame allows multiple representations of the same vector. This redundancy is often advantageous, as it leads to increased numerical stability and robustness against noise, losses, or perturbations of the frame elements (see, for example, \cite{meyer}).

\smallskip

Associated with any frame there are key operators, such as the analysis operator, synthesis operator, and frame operator, which play a central role in reconstruction. In particular, the frame operator is
defined as the mapping
\[
S:x \mapsto \sum_{n=0}^\infty \langle x,x_n \rangle x_n
\]
and it is
bounded, positive, and invertible, guaranteeing that every vector in $H$ can be recovered from its frame coefficients. Frames include important special cases such as tight frames and Parseval frames, which simplify reconstruction formulas. Recall that the sequence $(x_n)_{n \ge 0}$  is said to be a {\em tight frame\/} if we can take $C_1=C_2$ and a {\em Parseval frame\/} if, in addition, both $C_1$ and $C_2$ are equal to 1; or equivalently,
the \emph{analysis operator}
\[
\begin{array}{ccl}
C_{(x_n)}: H & \longrightarrow & \ell^2 \\[4pt]
x & \longmapsto & \big(\langle x, x_n \rangle\big)_{n\geq 0}
\end{array}
\]
is an isometry. It is well-known (see \cite[Chap.~4]{meyer}) that an equivalent condition for a sequence
$(x_n)_{n \ge 0}$ to be a frame on a Hilbert space $H$  is that the \emph{synthesis operator}
\begin{eqnarray}\label{eq:bddsurj}
C_{(x_n)}^*: \ell^2 & \longrightarrow  & H, \nonumber  \\ [6pt]
(a_n)_{n\geq 0} & \longmapsto & \sum_{n=0}^\infty a_n x_n
\end{eqnarray}
should be bounded and surjective. Note that $S=C_{(x_n)}^*C_{(x_n)}$.

\smallskip

As an important class of frames, \textbf{frames of iterations}, also known as \emph{dynamical frames}, arise from the action of a bounded linear operator $T$ on the Hilbert $H$. The key question here is finding conditions on $T$ and a collection of vectors $(u_j)_{j\in J}\subset H$ such that the family
\[
\{T^n u_j : j \in J,\; n\ge 0\}
\]
forms a frame for $H$. In \cite{CMPP20}, Cabrelli, Molter, Paternostro and Philipp found necessary and sufficient conditions for the existence
of dynamical frames (see also \cite{ACNP25}). In particular, they proved that there exists a collection $(u_j)_{j\in J}\subset H$ such that
$\{T^n u_j : j \in J,\; n \ge 0\}$ is a (dynamical) frame for $H$ if and only if $T$ is similar to a contraction $C$ whose adjoint $C^*$ is strongly stable, namely, there exists a bounded invertible operator $P$ such that
$P^{-1}TP = C$, $\|C\|\le 1$, and $(C^*)^n x \to 0$ for every $x\in H$.
Moreover, in order to generate a Parseval frame of iterations, the authors showed that the equivalence holds when $T$ is indeed a contraction whose adjoint is strongly stable.
Consequently, only contractions can produce Parseval frames of iterations, which turn out to be particularly important since their canonical dual coincides with the frame itself, being relevant in applications. We refer to the recent work \cite{ACKM25} where the authors present a unified survey of dynamical sampling and
its interplay with frame theory as well as the references therein.

\smallskip

Motivated by these results, in this manuscript we study dynamical frames for specific classes of operators. In particular, in Section \ref{sec:2} we are concerned with normal operators with a Riesz basis of eigenvectors, and we relate them to classical and more recent results in interpolation theory, namely, to \emph{weighted interpolation}. At this regard, we remark that the case of diagonal operators acting on $\ell^2$ when a single vector is iterated was explicitly described in \cite{CMPP20}, connecting the problem with interpolation sequences in the Hardy space on the open unit disc $H^2(\DD)$. In particular, for diagonal operators of multiplicity one, the following theorem is proved in \cite{ACMT17}:

\smallskip

\begin{thm}[\cite{ACMT17}]\label{thm:diagonal}
Let $D = \sum_{j\in J} \lambda_j P_j$ be a diagonal operator where $J$ is a finite or countable index set, $(\lambda_j)_{j\in J}\subset \mathbb{C}$ is a
bounded sequence of scalars, and $(P_j)_{j\in J}$ is a sequence of orthogonal
projections satisfying $P_j P_k = 0$ for $j \neq k$ and $\sum_{j\in J} P_j = \mathrm{Id}$. Assume further $\lambda_j \neq \lambda_k$ for $j \neq k$ and
each $P_j$ is rank one and let $\uu =(u_k)_{k\in\mathbb{N}} \in \ell^2$. Then the family
\[
\{D^{\ell} \uu : \ell = 0,1,\ldots\}
\]
is a frame if and only if the following conditions hold:
\begin{enumerate}
\item $|\lambda_k| < 1$ for all $k$;
\item $|\lambda_k| \to 1$;
\item $(\lambda_k)_{k\geq 0}$ satisfies Carleson’s condition
\[
\inf_n \prod_{k \neq n}
\frac{|\lambda_n - \lambda_k|}{|1 - \overline{\lambda_n}\lambda_k|}
\ge \delta,
\]
for some $\delta > 0$;
\item $u_k= m_k \sqrt{1 - |\lambda_k|^2}$ for some sequence $(m_k)_{k\geq 0}$ satisfying
\[
0 < C_1 \le |m_k| \le C_2 < \infty.
\]
\end{enumerate}
\end{thm}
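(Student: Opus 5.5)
Work in the orthonormal eigenbasis $(e_k)$ of $D$, so $D e_k=\lambda_k e_k$ and $\uu=\sum_k u_k e_k$. For $x=\sum_k x_k e_k$ the frame coefficients are
\[
\langle x, D^\ell \uu\rangle=\sum_k x_k \overline{u_k}\,\overline{\lambda_k}^{\,\ell}.
\]
The plan is to move this to $H^2(\DD)$ and use normalized reproducing kernels. Define
\[
f_x(z)=\sum_\ell \langle x,D^\ell\uu\rangle z^\ell=\sum_k \frac{x_k\overline{u_k}}{1-\overline{\lambda_k}z}
=\sum_k x_k\,\overline{u_k}\,k_{\lambda_k}(z),
\]
where $k_\lambda$ is the Szeg\H{o} kernel. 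Then $\sum_\ell|\langle x,D^\ell\uu\rangle|^2=\|f_x\|_{H^2}^2$. Writing $\tilde k_\lambda=\sqrt{1-|\lambda|^2}\,k_\lambda$ and $u_k=m_k\sqrt{1-|\lambda_k|^2}$, we get $f_x=\sum_k x_k\overline{m_k}\,\tilde k_{\lambda_k}$. So the frame property says exactly that the map $(x_k)\mapsto \sum_k x_k\overline{m_k}\tilde k_{\lambda_k}$ is bounded below and above from $\ell^2$ to $H^2$. This requires all $|\lambda_k|<1$ (otherwise $k_{\lambda_k}\notin H^2$), and it implies $u_k\neq 0$.

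\textbf{Sufficiency.} Assume (1)--(4). By Carleson's interpolation theorem (Shapiro--Shields), condition (3) is equivalent to $(\tilde k_{\lambda_k})$ being a Riesz sequence in $H^2$. Multiplying the coefficients by the bounded, bounded-below sequence $(\overline{m_k})$ preserves the Riesz bounds, which gives both frame inequalities. Condition (2) is not needed for this direction, but it is automatic under (3).

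\textbf{Necessity.} Assume the family is a frame, so $T:(x_k)\mapsto \sum_k x_k\overline{u_k}k_{\lambda_k}$ is bounded below and above. Testing on $x=e_k$ gives $C_1\le |u_k|^2/(1-|\lambda_k|^2)\le C_2$, which yields (1) and (4). Set $y_k=x_k\overline{m_k}$; since $m_k$ is bounded above and below, $(\tilde k_{\lambda_k})$ satisfies a two-sided Riesz inequality, i.e.\ it is a Riesz sequence. By Shapiro--Shields again this gives (3): uniform minimality plus the Bessel (Carleson measure) bound is equivalent to Carleson's condition. Finally (2) follows from (3), since an infinite Carleson sequence must tend to the boundary. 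Indeed, if a subsequence stayed in a compact disc, it would have an accumulation point, and the pseudo-hyperbolic separation would fail. The distinctness of the $\lambda_k$ is used here and in the Riesz property.

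\textbf{Main obstacle.} The one nontrivial ingredient is the equivalence between Carleson's condition and the Riesz sequence property of the normalized kernels. I would cite this as a classical result rather than reprove it. If a proof were required, I would obtain the upper bound from the Carleson embedding, and the lower bound by solving the interpolation problem $f(\lambda_k)=a_k\sqrt{1-|\lambda_k|^2}^{\,-1}$ with $\|f\|\lesssim\|a\|$ and then applying duality.
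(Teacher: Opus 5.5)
Your argument is correct and is essentially the paper's. The paper identifies the synthesis operator with $f=\sum_n a_n z^n\mapsto (f(\lambda_k)u_k)_k$ and reads the frame property as solvability of the weighted interpolation problem $f(\lambda_k)u_k=c_k$ for every $(c_k)\in\ell^2$; it then settles this with McPhail's theorem, which in this case is Shapiro--Shields. You use the adjoint (analysis-operator) formulation instead, namely the Riesz-sequence property of $(\overline{u_k}\,k_{\lambda_k})$, which is the dual form of the same classical result; your test on $x=e_k$ for (1) and (4), and your deduction of (2) from (3), make explicit points the paper leaves implicit.
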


The condition that the eigenvalues have modulus strictly less than one ensures that the operator is contractive and strongly stable. In \cite[Theorem~2.3]{CMPP20} the authors have generalized Theorem~ \ref{thm:diagonal} to arbitrary finite collection $\{\uu_i\}_{i\in I}$ by a rather non-trivial treatment, as they explicitly claimed.

\smallskip

In Section~\ref{sec:3} we deal with \emph{Carleson frames} or \emph{Alcamota frames}, namely frames satisfying the stated conditions (1)-(4).  As pointed out in \cite{ACKM25}, such frames have \emph{excessive redundancy}.
Indeed, the subfamily obtained by selecting each $N$th element from a
Carleson frame remains a frame under a mild additional condition on the spectrum of the diagonal operator $D$ regardless of the choice of $N \in \mathbb{N}$ (see \cite{ChristensenHasannasabPhilippStoeva2024} and \cite{KrishtalMiller2026}). {If $b\in\ell^{2}$, and the orbit $\{D^{\ell}b:\ell=0,1,\dots\}$ forms a Carleson frame for $\ell^{2}$, we first note that not every set $\Lambda=\{\lambda_k\}_{k\geq 0}\subset(0,\infty)$ satisfying the Müntz--Szász divergence condition
\begin{equation}\label{MS condition}
\sum_{\lambda_k\in\Lambda}\frac{1}{\lambda_k}=\infty,
\end{equation}
defines an orbit $\{D^{\lambda}b:\lambda\in\Lambda\}$ that is again a frame for $\ell^{2}$, and we prove that both  \eqref{MS condition} and a separation condition (\emph{a Carleson-type condition}) are necessary to be a frame in $\ell^{2}$. The proof proceeds by passing to a spectral/functional model in which $D$ is represented as multiplication by $t\in(0,1)$ on a Hilbert space $L^2(\nu)$.

\smallskip

In Section~\ref{sec:4} we consider multiplication operators on the Hardy space $H^2(\DD)$ and use weighted composition operators to describe the associated frames.
Finally, Section~\ref{sec:6} is motivated by a result in \cite{CHP20} expressing operator frames in terms of model spaces. We use this to analyse several new classes of frame, and then generalize it to the infinite case.

\section {Normal operators: the weighted interpolation approach} \label{sec:2}

Recall that a \emph{normal operator} on a Hilbert space $H$ is a bounded linear operator $T$ that commutes with its adjoint, that is,
$T T^* = T^* T.$ Equivalently, $T$ is normal if
$\|Tx\| = \|T^*x\| \quad \text{for all } x \in H$. Normal operators include important classes such as self-adjoint and unitary operators,
and they admit a spectral theorem that represents them as multiplication operators on suitable $L^2$-spaces.

\smallskip

Assume $T$ is a normal operator. We start  by pointing out that the authors in \cite{ACCMP17} proved  that there does not exist a Riesz basis of iterations
for a normal operator, that is, for any collection of vectors $G \subset H$, the family of iterates $\{T^n u : u \in G,\; n \ge 0\}$ is no longer a Riesz basis of $H$. Recall that a \emph{Riesz basis} in $H$ is a sequence of vectors
$(f_k)_{k\geq 0}\subset H$ that is the image of an orthonormal basis under a bounded,
invertible linear operator. Equivalently, $(f_k)_{k\geq 0}$ is a Riesz basis if it is complete
in $H$ and there exist constants $0<A\le B<\infty$ such that
\[
A \sum_k |c_k|^2 \;\le\;
\Big\|\sum_k c_k f_k\Big\|^2
\;\le\;
B \sum_k |c_k|^2,
\]
for all finite scalar sequences $(c_k)_{k\geq 0}$. Unlike general frames, a Riesz basis yields unique expansions of every vector in $H$,
while still allowing more flexibility than an orthonormal basis. In particular, Riesz bases are precisely the non-redundant frames.

\medskip

In this regards, the goal is to investigate the existence of redundant frames for normal operators. Obviously, since unitary operators are
never strongly stable, they cannot generate frames of iterations.
\smallskip

Our aim at this regards is making use of weighted interpolation studied by McPhail \cite{mcphail} and generalized in  \cite{JPP07} to provide an extension of \cite[Theorem~2.3]{CMPP20}. For the sake of clarity, we will first deal when the collection $\{\uu_i\}_{i\in I}$ is reduced to just one vector $\uu$.

\smallskip

\subsection*{Interpolation in the scalar case}

Assume $T$ is a diagonalizable normal operator in $\ell^2$  with eigenvalues $(\lambda_k)_{k \ge 0}$ and orthonormal eigenvectors $(\phi_k)_{k \ge 0}$ (in fact, the results we prove will also apply if the eigenvectors form merely a Riesz basis). Let us write $\uu=\sum_{k=0}^\infty u_k \phi_k \in \ell^2$.

\smallskip

Conditions under which $(T^n \uu)_{n \ge 0}$ forms a frame were established in
\cite{ACMT17} and \cite{CMPP20} as mentioned. In order to revisit these results using interpolation
techniques to extend them further, we conveniently work in the Hardy space $H^2(\DD)$. Recall that $H^2(\DD)$ consists of all analytic functions $f$ on the open unit disk $\mathbb{D}$
\[
f(z) = \sum_{n=0}^\infty a_n z^n, \qquad z \in \mathbb{D},
\]
where the sequence of Taylor coefficients $(a_n)_{n\ge 0} \in \ell^2$ and $\|f\|_{H^2}= \|(a_n)_{n\geq 0}\|_{\ell^2}$.

\smallskip

\noindent Hence, mapping the orthonormal basis elements $\phi_n$ to $z^n$, we find that for $f(z)=\sum_{n=0}^\infty a_n z^n\in H^2$,
\[
\sum_{n=0}^\infty  a_n T^n \uu = \sum_{n=0}^\infty a_n \sum_{k=0}^\infty   \lambda_k^n u_k z^k
= \sum_{k=0}^\infty f(\lambda_k)u_k z^k.
\]
Consequently, the boundedness and ontoness of the synthesis operator associated to $(T^n \uu)_{n \ge 0}$, namely, $C_{(T^n \uu)}^*$, is equivalent to the solution of the weighted interpolation problem
\begin{equation}\label{weighted interpolation}
f(\lambda_k)u_k= c_k \qquad (k\geq 0)
\end{equation}
for all sequences $(c_k)_{k\geq 0}$ in $\ell^2$.

\smallskip

The classical interpolation problem asks if there exist a sequence of points $(z_n)_{n\geq 0}$ in $\mathbb{D}$ having the property that, given an arbitrary bounded sequence of complex numbers $(a_n)_{n\geq 0}$, there exists a bounded analytic function $f$ on $\mathbb{D}$ such that
\begin{equation*}
f(z_n) = a_n \qquad (n\geq 0)
\end{equation*}
The existence of such a sequence, called an \emph{interpolating sequence}, was
established independently by L.~Carleson~\cite{Carleson}, W.~Hayman~\cite{Hayman},
and D.~J.~Newman~\cite{Newman}. In fact, Carleson showed that a sequence
$(z_n)_{n\geq 0}$ in $\mathbb{D}$ is an interpolating sequence if and only if there exists
$\delta > 0$ such that
\begin{equation}\label{Carleson}
\prod_{n \neq k}
\left|
\frac{z_k - z_n}{1 - \overline{z}_n z_k}
\right|
\ge \delta
\qquad (k = 0,1,2,\ldots).
\end{equation}
Clearly, such sequences consist of distinct points. For a discussion of independent and related results, see H.~S.~Shapiro and
A.~L.~Shields~\cite{ShapiroShields} and Garnett's monograph \cite{Garnett}.

\smallskip

In 1990, McPhail \cite{mcphail} studied a weighted interpolation problem which, in particular, extended Shapiro and Shield's results. More precisely,  given a sequence $(z_n)_{n\geq 0}$ in $\mathbb{D}$, let $\Lambda = \Lambda_{(z_n)}$ denote the linear map sending an analytic function
$f$ on $\mathbb{D}$ to the sequence of its values on the $z_n$, namely
$(f(z_n))_{n\geq 0}$. In this setting, Carleson’s result says that
$\Lambda (H^\infty) \supset \ell^\infty $, or equivalently, $\Lambda (H^\infty) = \ell^\infty$
if and only if condition~ \eqref{Carleson} holds. Here $H^\infty$ denotes the Hardy space of bounded analytic functions on $\mathbb{D}$ endowed with the sup norm
$\|f\|_{H^\infty}= \sup_{z \in \mathbb{D}} |f(z)|$.

\medskip

Given a sequence $w =(w_n)_{n\geq 0}$ of positive real numbers, let
$\ell^p(w)$ denote the space of sequences $a = (a_n)_{n\geq 0}$ satisfying
\begin{equation}
\|a\|_{\ell^p(w)} =
\begin{cases}
\displaystyle
\left( \sum_{n=0}^\infty |a_n w_n|^p \right)^{1/p} < \infty,
& 1 \le p < \infty, \\
\noalign{\smallskip} \\[8pt]
\displaystyle
\sup_{n \ge 0} |a_n w_n| < \infty,
& p = \infty.
\end{cases}
\end{equation}

The McPhail theorem characterizes the sequences $(z_n)_{n\geq 0}$ in $\mathbb{D}$ such that
$\Lambda_{(z_n)} (H^p(\nu)) = \ell^p(w)$ for some weighted Hardy spaces $H^p(\nu)$ (see \cite{mcphail} for more details).
As a particular instance, the Shapiro and Shields~\cite{ShapiroShields} theorem follows if $\nu = 1$ and $w_n = (1 - |z_n|)^{1/p}$ for  $(n \ge 0)$, that is,
\[
\Lambda_{(z_n)}(H^p) \supset \ell^p(w)
\]
(in fact, equivalently $\Lambda_{(z_n)}(H^p) = \ell^p(w)$) if and only if condition~ \eqref{Carleson} holds.

\smallskip

We now quote a simplified version of the theorem of McPhail \cite{mcphail} for our purposes:


\begin{thm}[\cite{mcphail}]\label{thm:mcphail}
Let $w =(w_n)_{n\geq 0}$ be a sequence of positive real numbers and $(z_n)_{n\geq 0}\subset \mathbb{D}$. Then
$\Lambda_{(z_n)}(H^2) = \ell^2(w)$ if and only if   condition~\eqref{Carleson} holds and there exists constants
$C_1, C_2 > 0$ such that
\[
C_1 \le w_n/\sqrt{1-|z_n|} \le C_2
\]
for all $n$.
\end{thm}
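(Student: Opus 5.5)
The plan is to reduce McPhail's statement to the Shapiro--Shields theorem quoted above (the case $p=2$, $\nu=1$, $w_n=(1-|z_n|)^{1/2}$), together with the elementary fact that $\ell^2(w)=\ell^2(w')$ as sets exactly when $w_n/w_n'$ is bounded above and below. Two more tools enter: Carleson's embedding theorem for sequences satisfying \eqref{Carleson}, and the reproducing kernels $k_{z}(\zeta)=(1-\bar z\zeta)^{-1}$ of $H^2$, with $\|k_z\|^2=(1-|z|^2)^{-1}$.

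\emph{Sufficiency.} Assume \eqref{Carleson} holds and $C_1\le w_n/\sqrt{1-|z_n|}\le C_2$. The weight comparability gives $\ell^2(w)=\ell^2\big((1-|z_n|)^{1/2}\big)$ with equivalent norms. The Shapiro--Shields theorem then yields $\Lambda_{(z_n)}(H^2)=\ell^2\big((1-|z_n|)^{1/2}\big)=\ell^2(w)$. The inclusion $\Lambda(H^2)\subset\ell^2(\sqrt{1-|z_n|})$ in that theorem is the Carleson embedding: under \eqref{Carleson}, the measure $\sum_n (1-|z_n|)\delta_{z_n}$ is a Carleson measure.

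\emph{Necessity.} Assume $\Lambda(H^2)=\ell^2(w)$. This proceeds in three steps.

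\begin{enumerate}
\item \textbf{Boundedness.} By the closed graph theorem, $\Lambda\colon H^2\to\ell^2(w)$ is bounded: point evaluations are continuous on both spaces, so the graph is closed. It is also surjective. By the open mapping theorem, every $a\in\ell^2(w)$ is $\Lambda f$ for some $f$ with $\|f\|\le K\|a\|_{\ell^2(w)}$.
\item \textbf{Upper bound on the weights.} Take $a=e_n/w_n$, which has norm one in $\ell^2(w)$. This gives $f_n\in H^2$ with $f_n(z_n)=1/w_n$, $f_n(z_m)=0$ for $m\neq n$, and $\|f_n\|\le K$. Then
\[
\frac{1}{w_n}=|f_n(z_n)|\le K(1-|z_n|^2)^{-1/2},
\]
so $w_n\ge c\sqrt{1-|z_n|}$.
\item \textbf{Lower bound on the weights.} Apply boundedness of $\Lambda$ to $k_{z_n}$:
\[
w_n^2(1-|z_n|^2)^{-2}\le \sum_m |k_{z_n}(z_m)|^2w_m^2\le \|\Lambda\|^2(1-|z_n|^2)^{-1}.
\]
Hence $w_n\le C\sqrt{1-|z_n|}$.
\end{enumerate}

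With both weight bounds, $\ell^2(w)=\ell^2(\sqrt{1-|z_n|})$. The converse direction of Shapiro--Shields (surjectivity forces \eqref{Carleson}) then gives the Carleson condition.

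If one wants to avoid quoting that converse, it can be derived directly from the minimal-norm interpolants in step 2, and this is the main obstacle. Write $f_n=B_n g_n$, where $B_n$ is the Blaschke product over $\{z_m\}_{m\neq n}$; dividing out inner factors does not increase the norm. Then
\[
\frac{1}{w_n}=|B_n(z_n)|\,|g_n(z_n)|\le |B_n(z_n)|\,K(1-|z_n|^2)^{-1/2}.
\]
Combining this with $w_n\le C\sqrt{1-|z_n|}$ gives $|B_n(z_n)|\ge \delta>0$ uniformly in $n$, which is exactly \eqref{Carleson}. A point to check here is that $(z_n)$ is a Blaschke sequence, so that $B_n$ is defined. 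This holds because $f_n\not\equiv0$ vanishes on $\{z_m\}_{m\neq n}$, and zero sets of nonzero $H^2$ functions satisfy the Blaschke condition.
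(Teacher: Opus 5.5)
Your proof is correct. Note that the paper gives no proof of this statement: it quotes it from McPhail's theorem on weighted Hardy spaces $H^p(\nu)$, and it presents the Shapiro--Shields theorem as the special case $\nu=1$, $w_n=(1-|z_n|)^{1/p}$ of McPhail's result.

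You run the logic the other way. This is not circular, since Shapiro--Shields has an independent proof. Your point is that for $p=2$ and the unweighted space $H^2$, the weights are fixed up to comparability by the problem itself:
\begin{itemize}
\item the open mapping theorem, tested on interpolants of $e_n/w_n$, gives $w_n\gtrsim\sqrt{1-|z_n|}$;
\item boundedness of $\Lambda$, tested on $k_{z_n}$, gives $w_n\lesssim\sqrt{1-|z_n|}$.
\end{itemize}
The statement then collapses to Shapiro--Shields.

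Your Blaschke-division argument makes the necessity of \eqref{Carleson} self-contained. The mere existence of the $f_n$ already forces the $z_n$ to be distinct, which is what the zero-set/Blaschke step needs. So the only input you genuinely import is the hard existence half of Shapiro--Shields, namely building interpolants under \eqref{Carleson}.

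What each route buys: McPhail's theorem covers all $p$ and weighted spaces $H^p(\nu)$, where a single kernel estimate no longer pins down the weights. Your route makes transparent why the weights must be $\asymp\sqrt{1-|z_n|}$.

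One cosmetic point: your step labels are swapped. Step 2 gives the lower bound $w_n\ge c\sqrt{1-|z_n|}$, and step 3 gives the upper bound.
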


With Theorem~\ref{thm:mcphail} at hand, the weighted interpolation problem \eqref{weighted interpolation} is solvable and hence, Theorem \ref{thm:diagonal} follows readily.

\begin{cor}
Assume $T$ is a diagonalizable normal operator in $\ell^2$  with eigenvalues $(\lambda_k)_{k \ge 0}$ associated to a (Riesz) basis of eigenvectors $(\phi_k)_{k \ge 0}$ and let $\uu=\sum_{k=0}^\infty u_k \phi_k \in \ell^2$. The sequence $(T^n \uu)_{n\geq 0}$ forms a frame in $\ell^2$ if and only if the sequence $(\lambda_k)_{k \ge 0}$ is an interpolating sequence in $\DD$ and there exists constants $C_1, C_2 > 0$ such that
\[
C_1 \le |u_k|/\sqrt{1-|\lambda_k|}
\le C_2
\]
for all $k$.
\end{cor}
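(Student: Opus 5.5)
The approach is to reduce the frame property to surjectivity of a weighted evaluation map and then apply Theorem~\ref{thm:mcphail}. First I fix the reduction. Let $R$ be the bounded invertible operator sending the orthonormal basis $(e_k)$ of $\ell^2$ to the Riesz basis $(\phi_k)$. Composing the synthesis operator with $R^{-1}$ preserves boundedness and surjectivity, so we may assume $(\phi_k)$ is orthonormal. Identify $\ell^2$ with $H^2$ via coefficients. The computation preceding \eqref{weighted interpolation} shows that
\[
C^*_{(T^n\uu)}(a) = \sum_k f(\lambda_k)u_k\phi_k, \qquad f=\sum_n a_nz^n,
\]
at least for finitely supported $a$, i.e.\ polynomial $f$. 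By the characterization \eqref{eq:bddsurj}, $(T^n\uu)$ is a frame if and only if this map extends boundedly from $H^2$ and is onto $\ell^2$. Equivalently, $f\mapsto (f(\lambda_k)u_k)_k$ is bounded from $H^2$ to $\ell^2$, and every $\ell^2$ sequence arises.

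Next I handle some degenerate cases so that the statement matches Theorem~\ref{thm:mcphail}. If $\uu$ is a frame generator then the orbit is complete, so each $u_k\ne0$. Also $|\lambda_k|<1$: otherwise the $k$-th coordinate of the orbit, $(\lambda_k^n u_k)_n$, is not in $\ell^2$. Even for $|\lambda_k|=1$ the upper frame bound tested on $\phi_k$ fails, since $\sum_n|\lambda_k|^{2n}|u_k|^2=\infty$. Setting $w_k=1/|u_k|$, the surjectivity condition becomes exactly $\Lambda_{(\lambda_k)}(H^2)\supset \ell^2(w)$. Here $c=(f(\lambda_k)u_k)\in\ell^2$ corresponds to $(f(\lambda_k))\in\ell^2(w)$, and boundedness corresponds to $\Lambda_{(\lambda_k)}(H^2)\subset\ell^2(w)$. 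So the frame condition is precisely $\Lambda_{(\lambda_k)}(H^2)=\ell^2(w)$. One subtlety: points $\lambda_k$ might repeat. But repetition is incompatible with surjectivity, because $f(\lambda_k)u_k$ and $f(\lambda_j)u_j$ would then be proportional, so we may assume the $\lambda_k$ are distinct.

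Now apply Theorem~\ref{thm:mcphail}. The equality $\Lambda(H^2)=\ell^2(w)$ holds if and only if $(\lambda_k)$ satisfies \eqref{Carleson}, i.e.\ is interpolating, and $w_k/\sqrt{1-|\lambda_k|}$ is bounded above and below. I need to be careful with the weight convention here. In Shapiro--Shields, $\Lambda(H^2)=\{(a_n): \sum|a_n|^2(1-|z_n|)<\infty\}$, so the natural weight satisfies $w_n\asymp\sqrt{1-|z_n|}$ multiplying $a_n$. In our setting $f(\lambda_k)u_k\in\ell^2$ means the weight is $|u_k|$ itself. So I should set $w_k=|u_k|$, not $1/|u_k|$, and redo the identification: $c_k=f(\lambda_k)u_k$ gives $\|c\|_{\ell^2}=\|(f(\lambda_k))\|_{\ell^2(w)}$ with $w_k=|u_k|$. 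McPhail's condition then reads $C_1\le |u_k|/\sqrt{1-|\lambda_k|}\le C_2$, as claimed.

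The main obstacle is this bookkeeping of the bounded-plus-onto equivalence, together with justifying the extension from polynomials to all of $H^2$. The extension is where the Carleson embedding (upper bound) enters. Since $H^2$ point evaluations are continuous, the bounded extension agrees with $\sum_k f(\lambda_k)u_k\phi_k$ for every $f\in H^2$.
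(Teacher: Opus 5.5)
Your proposal is correct and follows the paper's own route. You identify the synthesis operator with $f\mapsto (f(\lambda_k)u_k)_k$ on $H^2$, recognize the frame property as $\Lambda_{(\lambda_k)}(H^2)=\ell^2(w)$ with $w_k=|u_k|$, and invoke McPhail's theorem. Your handling of the Riesz-basis reduction, $u_k\neq 0$, $|\lambda_k|<1$, distinctness, and the extension from polynomials fills in details the paper leaves implicit. Delete the false start with $w_k=1/|u_k|$: only $w_k=|u_k|$ gives $\|(f(\lambda_k))\|_{\ell^2(w)}=\|(f(\lambda_k)u_k)\|_{\ell^2}$.
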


\subsection*{The multivariable case} Having discussed the scalar case, we deal with the multivariable case. Without loss of generality, we may assume that $T$ is a diagonalizable normal operator in $H^2(\DD)$  with eigenvalues $(\lambda_k)_{k \ge 0}$ associated to a (Riesz) basis of eigenvectors. Let $\{\uu_1,\ldots,\uu_m\}$ be a set in $H^2(\DD)$.  Our aim is studying when
$$\{T^n \uu_k: n \ge 0, k=1,\ldots,m\}$$
is a frame in $H^2(\DD)$.

\smallskip

For such a goal, we will make use of a more general form of McPhail's theorem, given in \cite{JPP07}, pages 537--538 (see also \cite{JZ01}). The aim is solving a vector-valued weighted interpolation problem a little more involved than \eqref{weighted interpolation}, namely,
\begin{equation}\label{weighted interpolation2}
G_k f(\lambda_k)= c_k \qquad (k\geq 0)
\end{equation}
for all sequences $(c_k)_{k\geq 0}$ in $\ell^2$, where the $G_k$ are rank-1 operators.

\smallskip

In order to state the vector-valued weighted interpolation theorem required, one change has to be made, as these works \cite{JZ01,JPP07} take place in the setting of the right-half plane
$$\CC_+=\{ z \in \mathbb{C} : \Re z > 0 \}.$$
Recall that the Hardy space $H^2(\mathbb{C}_+)$ of the right half-plane consists of all functions
$f$ that are analytic in $\mathbb{C}_+$ and satisfy
\[
\|f\|_{H^2(\mathbb{C}_+)}^2=
\sup_{x>0}
\int_{-\infty}^{\infty} |f(x+iy)|^2 \, dy
< \infty.
\]
It is worth remarking that the norms on $H^2(\mathbb{C}_+)$ and $L^2(0,\infty)$ are equivalent via the Laplace
transform. More precisely, the Laplace transform
\[
\mathcal{L}f(s)= \int_0^\infty e^{-st} f(t)\,dt,
\qquad s \in \mathbb{C}_+,
\]
defines (up to a constant) an isometric isomorphism from $L^2(0,\infty)$ onto $H^2(\mathbb{C}_+)$, that is,
$\|\mathcal{L}f\|_{H^2(\mathbb{C}_+)}\approx\|f\|_{L^2(0,\infty)}$ (see \cite{LOLS}, for instance).

\smallskip

Let $M$ denote the involution
$$M(s)=\frac{1-s}{1+s}, \qquad (s\in \CC_+)$$
between $\CC_+$ and $\DD$. We have that $f\in H^2(\DD)$ if and only if $Vf \in H^2(\CC_+)$ with an equivalence of norms, where
$$Vf(s)= \frac{1}{1+s} f(M(s)), \qquad (s \in \CC_+)$$
(see, e.g. \cite[Chap. 8]{hoffman}). We summarize the change of coordinates as follows.

\begin{prop}
Let $(z_k)_{k\geq 0}$ be a sequence in $\DD$ and $(c_k)_{k\geq 0}$ a sequence of complex numbers. Then given a sequence $(G_k)$ of linear mappings $G_k: \CC^N \to \CC$, $N\geq 1$, the function $f \in H^2(\DD)$ satisfies the interpolation conditions
$$G_k f(z_k)=c_k$$
for each $k$ if and only if the function $F=Vf$ in $H^2(\CC_+)$ satisfies the interpolation condition
$$G_k (Vf)(s_k)=c_k$$
where $s_k=M(z_k)$ for each $k$.
\end{prop}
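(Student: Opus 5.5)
The plan is to unwind the definition of $V$ at the interpolation nodes and then use the linearity of each $G_k$. No analytic input is needed beyond the fact, quoted above from \cite[Chap.~8]{hoffman}, that $V$ is a bijection $H^2(\DD)\to H^2(\CC_+)$ with equivalent norms. This bijectivity ensures that as $f$ ranges over $H^2(\DD)$, $F=Vf$ ranges over all of $H^2(\CC_+)$, so the two interpolation problems are posed over corresponding function classes.

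First, I will record that $M$ is an involution. A one-line computation gives $M(M(s))=s$, and $M$ maps $\CC_+$ onto $\DD$ and $\DD$ onto $\CC_+$. Hence $s_k=M(z_k)\in\CC_+$ and $M(s_k)=z_k$. I will also note that $1+s_k=2/(1+z_k)\neq 0$, since $z_k\in\DD$.

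Next, I evaluate $V$ at the nodes. Directly from the definition,
\[
(Vf)(s_k)=\frac{1}{1+s_k}\,f(M(s_k))=\frac{1}{1+s_k}\,f(z_k).
\]
For vector-valued $f$ (values in $\CC^N$) this holds componentwise. Applying the linear map $G_k:\CC^N\to\CC$ then gives
\[
G_k(Vf)(s_k)=\frac{1}{1+s_k}\,G_k f(z_k).
\]
Thus the two families of conditions coincide after rescaling the $k$-th datum by the nonzero scalar $(1+s_k)^{-1}=(1+z_k)/2$.

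The main point to handle carefully is this scalar factor. The two conditions agree verbatim only once it is absorbed. I would absorb it into the data: since $k\mapsto (1+s_k)^{-1}$ is a fixed sequence of nonzero numbers, the map $(c_k)\mapsto(c_k/(1+s_k))$ is a bijection of the space of complex sequences. Hence ``$G_kf(z_k)=c_k$ for all $k$'' is equivalent to ``$G_k(Vf)(s_k)=c_k'$ for all $k$'', where $c_k'=c_k/(1+s_k)$. Alternatively, I would absorb it into the functionals by replacing $G_k$ with $(1+s_k)G_k$, which is again a linear map $\CC^N\to\CC$ of the same rank. Either normalization is how the proposition will be used in the sequel, since only the solvability of the problem for whole classes of data and whole families $(G_k)$ matters there. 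I would state this identification explicitly in the proof.

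I would also remark that $|(1+z_k)/2|\le 1$, so the rescaling is bounded above and does not destroy $\ell^2$-type membership of the data. It is bounded below only if the $z_k$ stay away from $-1$, and this point must be tracked when the weights are transferred to the half-plane setting.
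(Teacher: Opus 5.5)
Your proof is correct and is essentially the paper's own argument: the paper gives no separate proof beyond the change of coordinates, i.e.\ evaluating $Vf$ at $s_k=M(z_k)$ using $M\circ M=\mathrm{id}$ and the linearity of $G_k$. You are also right that the statement as printed omits the nonzero factor $1+s_k=2/(1+z_k)$. The paper restores it in the equivalence displayed immediately afterwards, as $(1+s_k)G_kF(s_k)=c_k$. That is your ``absorb into the functionals'' normalization, and it sidesteps the lower-bound issue near $-1$ that you correctly flag for the ``absorb into the data'' version.
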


Let $H^2(\mathbb{D}; \mathbb{C}^N)$ denote the space of $\mathbb{C}^N$-valued functions $f = (f_1,\ldots,f_N)$ that are analytic on $\mathbb{D}$
and $\|f\|_{H^2(\mathbb{D}; \mathbb{C}^N)}^2:= \sum_{k=1}^N \|f_k\|_{H^2(\mathbb{D})}^2$, or equivalently,
\[
\|f\|_{H^2(\mathbb{D}; \mathbb{C}^N)}^2= \sup_{0<r<1} \frac{1}{2\pi}
\int_{0}^{2\pi} \|f(re^{i\theta})\|_{\mathbb{C}^N}^2 \, d\theta
< \infty,
\]
where $\|\cdot\|_{\mathbb{C}^N}$ is the Euclidean norm on $\mathbb{C}^N$. As a consequence, the following conditions are equivalent:

\begin{itemize}
  \item $(G_k)_{k\geq 0}$ and $(z_k)_{k\geq 0}$ have the property that for all $(c_k)_{k\geq 0} \in \ell^2$
  we can find $f \in H^2(\DD; \CC^N)$ such that $$G_k f(z_k)=c_k$$ for all $k$;
  \item $(G_k)_{k\geq 0}$ and $(s_k)_{k\geq 0}=(M(z_k))_{k\geq 0}$ have the property that for all $(c_k)_{k\geq 0} \in \ell^2$
  we can find $F \in H^2(\CC_+; \CC^N)$ such that $$(1+s_k)G_k F(s_k)=c_k$$ for all $k$.
  \end{itemize}

Now, Theorem 2.24 of \cite{JPP07}, combined with Theorem 2 of \cite{JZ01}, gives the following result for scalar interpolation in $H^2(\CC_+;\CC^N)$.
We write $\EE(F)=(G_k F(s_k))$ for $F \in H^2(\CC_+;\CC^N)$, where each $G_k$
is a linear mapping from $\CC^N$ to $\CC$, and thus has the form $G_k(x)=\langle x, g_k \rangle$ for some $g_k \in \CC^N$.

\begin{thm}
 The following are equivalent:
 \begin{itemize}
 \item $\EE(H^2(\CC_+;\CC^N))= \ell^2$;
 \item $(s_k)$ is the union of at most $N$ Carleson sequences, the
 quantities\\ $\|g_k\|^2/\re s_k$ are bounded above and below, and there exists a constant
 $r>0$ such that
 \[
 \inf_{m \in \NN} \min_{s_n \in \Lambda_m(r)} \angle (g_n e^{-s_n t}, \spam_{s_j\in\Lambda_m(r), j \ne n}
 \{ g_j e^{-s_j t}\} ) > 0,
 \]
 where $\Lambda_m(r) = \{ s_n : \left| \dfrac{s_n-s_m}{s_n+\overline{s_m}} \right| < r \}$
 and the angle is calculated in $L^2(0,\infty; \CC^N)$.
 \item The set $\{ g_k e^{-s_k t}: k \in \NN\}$ is a Riesz basic sequence
 in $L^2(0,\infty; \CC^N)$.
 \end{itemize}
 \end{thm}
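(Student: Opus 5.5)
The statement is presented in the paper as a consequence of Theorem 2.24 of \cite{JPP07} combined with Theorem 2 of \cite{JZ01}. My plan is therefore to assemble those results around one duality step, rather than to reprove Carleson-type interpolation from scratch.

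\emph{Step 1: reformulate surjectivity of $\EE$.} Let $\mathcal{L}^{-1}$ be the inverse Laplace transform. For $F=\mathcal{L}h$ with $h\in L^2(0,\infty;\CC^N)$,
\[
G_kF(s_k)=\int_0^\infty \langle h(t),\, g_k e^{-\overline{s_k}t}\rangle\,dt .
\]
So $\EE$ is, up to the bounded invertible Laplace transform and conjugation of the nodes, the analysis operator of the system $\{g_k e^{-s_k t}\}$ in $L^2(0,\infty;\CC^N)$. Conjugating the $s_k$ preserves all the conditions in the second item, so I will ignore it.

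The key fact is the standard one: for a sequence $(x_k)$, the analysis operator $x\mapsto(\langle x,x_k\rangle)$ is bounded and onto $\ell^2$ if and only if the synthesis operator is bounded and bounded below. That is exactly the statement that $(x_k)$ is a Riesz basic sequence.
\begin{itemize}
\item Onto $\ell^2$ plus the closed graph theorem gives boundedness of the analysis operator.
\item Surjectivity of a bounded operator is equivalent to its adjoint being bounded below.
\end{itemize}
This yields the equivalence of the first and third items. One caveat is needed: $\EE(H^2)=\ell^2$ should be read as including $\EE(H^2)\subseteq\ell^2$, i.e.\ a Bessel-type upper bound.

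\emph{Step 2: Riesz basic sequence $\Rightarrow$ the geometric conditions.} Assume $\{g_ke^{-s_kt}\}$ is Riesz basic. Then the norms
\[
\|g_ke^{-s_kt}\|^2=\frac{\|g_k\|^2}{2\re s_k}
\]
are bounded above and below. The angle condition then follows immediately from the lower Riesz bound, since any Riesz basic sequence is uniformly minimal on every subfamily.

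For the claim that $(s_k)$ is a union of at most $N$ Carleson sequences, I would use the Bessel (upper) bound. Applied with test functions that are normalized reproducing kernels, it shows that $\sum_k \re s_k\,\delta_{s_k}$ is a Carleson measure. A pigeonhole argument then shows that any pseudo-hyperbolic disc of small radius $r$ contains at most $N$ points: $N+1$ nearby exponentials with vectors in $\CC^N$ are nearly linearly dependent, which contradicts the lower Riesz bound. Finally, a Carleson measure whose points have uniformly bounded local multiplicity splits into at most $N$ separated, hence Carleson, sequences.

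\emph{Step 3: the geometric conditions $\Rightarrow$ Riesz basic sequence.} I expect this to be the main obstacle, and here I would lean on Theorem 2 of \cite{JZ01}. The idea is to partition the nodes into the clusters $\Lambda_m(r)$, each of bounded cardinality.
\begin{itemize}
\item Inside a cluster, the angle condition together with the norm bounds gives a uniform Riesz bound for the finite family.
\item Between clusters, separation lets one build the interpolation by blocks. I would use Carleson--Blaschke-product techniques, or equivalently the boundedness of the associated Gram matrix, to glue the clusters with uniform constants.
\end{itemize}
The Carleson-measure property from the union-of-$N$-Carleson hypothesis supplies the upper bound. The delicate point is the lower bound: one must show the interaction between distinct clusters is controlled well enough that the local (cluster-wise) lower bounds survive globally. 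This is precisely the content imported from \cite{JPP07,JZ01}, so I would cite it for this step rather than redo the estimate.
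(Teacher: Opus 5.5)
Your proposal is correct and follows essentially the paper's route: the paper gives no argument beyond combining Theorem~2.24 of \cite{JPP07} with Theorem~2 of \cite{JZ01}, and you likewise import the hard implication (geometric conditions $\Rightarrow$ Riesz basic sequence) from those sources, while correctly supplying the duality argument for the equivalence of the first and third items and the derivation of the norm, Carleson and angle conditions from the Riesz bounds. The one step that needs an extra line is your claim that conjugating the nodes preserves the angle condition. The cluster Gram matrices $(\langle g_j,g_i\rangle/(s_j+\overline{s_i}))$ and $(\langle g_j,g_i\rangle/(\overline{s_j}+s_i))$ can have different determinants once three points carry vectors with non-real $\langle g_1,g_2\rangle\langle g_2,g_3\rangle\langle g_3,g_1\rangle$, so the invariance is not formal, although it does follow from a compactness argument over affinely normalized clusters of bounded cardinality.
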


 Note that the angle may  be expressed in terms of norms of operators, as in \cite[Lem.~2.15]{JPP07}.

On translating this back to the disc, noting that
$$\frac{1}{1+s_k}=\frac{1+z_k}{2}\text{ and } \re s_k= (1-|z_k|^2)/|1+z_k|^2,$$
and having in mind that the norms in $H^2(\CC_+)$ and  $L^2(0,\infty)$  are equivalent by the Laplace
transform, together with the properties of $V$ above, we obtain the following,
on writing $ \widetilde \EE(f)=(G_k f(z_k))$ for $f \in H^2(\CC_N)$ and
$G_k(x)=\langle x, g_k\rangle$ again.

 \begin{thm}\label{thm:interpold}
 The following are equivalent:
 \begin{itemize}
 \item $\widetilde \EE(H^2(\CC_+;\DD^N))= \ell^2$;
 \item $(z_k)$ is the union of at most $N$ Carleson sequences, the
 quantities\\ $\|g_k\|^2 (1-|z_k|^2)$ are bounded above and below, and there exists a constant
 $r>0$ such that
 \[
 \inf_{m \in \NN} \min_{z_n \in \Lambda_m(r)} \angle (g_n(1+z)/(1-z_n z), \spam_{z_j\in\Lambda_m(r), j \ne n}
 \{ g_j(1+z)/(1-z_j z)\} ) > 0,
 \]
 where $\Lambda_m(r) = \{ z_n : \left| \dfrac{z_n-z_m}{z_n - \overline{z_m}} \right| < r \}$
 and the angle is calculated in $H^2(\DD; \CC^N)$.
 \item The set $\{g_n(1+z)/(1-z_n z): k \in \NN\}$ is a Riesz basic sequence
 in $H^2(\DD; \CC^N)$.
 \end{itemize}
 \end{thm}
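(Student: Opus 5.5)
The plan is to transfer the half-plane theorem above (Theorem 2.24 of \cite{JPP07} combined with Theorem 2 of \cite{JZ01}) to the disc, using the operator $V$ and the involution $M$. Each of the three conditions should translate term by term. First, given $f\in H^2(\DD;\CC^N)$, set $F=Vf$, so $F(s)=\frac{1}{1+s}f(M(s))$. Because $V$ is bounded with bounded inverse, it maps $H^2(\DD;\CC^N)$ onto $H^2(\CC_+;\CC^N)$ (componentwise), and $M$ is an involution, so $f(z_k)=(1+s_k)F(s_k)$ with $s_k=M(z_k)$. Hence
\[
\widetilde\EE(f)=\bigl(\langle (1+s_k)F(s_k),g_k\rangle\bigr)=\bigl(\langle F(s_k),\tilde g_k\rangle\bigr), \qquad \tilde g_k=\overline{(1+s_k)}\,g_k .
\]
Therefore $\widetilde\EE(H^2(\DD;\CC^N))=\ell^2$ exactly when the half-plane map $\EE$, built with the modified vectors $\tilde g_k$, is onto $\ell^2$. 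This is the equivalence recorded just before the half-plane theorem.

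Next I would apply the half-plane theorem to the data $(s_k,\tilde g_k)$ and translate each condition back.

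\textbf{Carleson condition.} The pseudo-hyperbolic distance is preserved by the conformal map $M$: $\bigl|\frac{s_n-s_m}{s_n+\overline{s_m}}\bigr|$ equals the disc pseudo-hyperbolic distance between $z_n$ and $z_m$. So $(s_k)$ is a union of at most $N$ Carleson sequences if and only if $(z_k)$ is, and the sets $\Lambda_m(r)$ correspond.

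\textbf{Norm condition.} Using $|1+s_k|=2/|1+z_k|$ and $\re s_k=(1-|z_k|^2)/|1+z_k|^2$,
\[
\frac{\|\tilde g_k\|^2}{\re s_k}=\frac{4\|g_k\|^2}{|1+z_k|^2}\cdot\frac{|1+z_k|^2}{1-|z_k|^2}=\frac{4\|g_k\|^2}{1-|z_k|^2}.
\]
As printed, the statement's normalisation reads $\|g_k\|^2(1-|z_k|^2)$. I would check which convention for $G_k$ and $g_k$ is intended, and state the bounded-above-and-below condition in whichever form the computation gives.

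\textbf{Riesz basic sequence.} Apply the inverse Laplace transform to $\tilde g_n e^{-s_n t}$. This gives the reproducing-kernel-type function $\tilde g_n/(s+\overline{s_n})$ in $H^2(\CC_+;\CC^N)$. Then pull it back by $V^{-1}$, that is, $h(z)=\frac{2}{1+z}H(M(z))$ up to the constant in $V$. A direct computation should turn $\frac{1}{M(z)+\overline{s_n}}$ into a multiple of $\frac{1+z}{1-\overline{z_n}z}$ times a scalar factor depending only on $n$, coming from $1+\overline{s_n}$. These scalar factors cancel against the $\overline{(1+s_n)}$ in $\tilde g_n$, up to unimodular constants. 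Riesz basic sequences and the angles between a vector and the span of others are invariant under bounded invertible operators up to constants. Multiplying individual vectors by unimodular scalars does not change Riesz-basicity at all, and leaves angles unchanged. So both the angle condition and the Riesz basic sequence condition transfer to the disc vectors $g_n(1+z)/(1-z_nz)$, read with the conjugate $\overline{z_n}$ as the kernel requires.

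I expect the main obstacle to be this last bookkeeping step: the explicit computation of $V^{-1}$ applied to the half-plane kernels, and tracking the scalar factors $(1+s_n)$ and $(1+z_n)$ and the conjugations so that they genuinely cancel rather than leave unbounded weights. If residual factors of size $|1+z_n|$ do not cancel, I would try absorbing them into the norm normalisation, since the norm condition already controls $\|g_k\|$.
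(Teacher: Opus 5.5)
You follow the paper's own route; its proof is exactly this transfer through $V$, $M$ and the Laplace transform. Your treatment of the first item, the Carleson condition and the norm condition is correct. The gap is in the last step, the one you flagged, and it is not just bookkeeping: if you carry out the computation you set up, the factor $1+z$ cancels. Indeed $V^{-1}F(z)=\frac{2}{1+z}F(M(z))$ and $M(z)+\overline{s_n}=2(1-\overline{z_n}z)/\bigl((1+z)(1+\overline{z_n})\bigr)$. With $\tilde g_n=\frac{2}{1+\overline{z_n}}g_n$ this gives
\[
V^{-1}\Bigl(\frac{\tilde g_n}{s+\overline{s_n}}\Bigr)(z)
=\frac{2}{1+\overline{z_n}}\,g_n\cdot\frac{2}{1+z}\cdot\frac{(1+z)(1+\overline{z_n})}{2(1-\overline{z_n}z)}
=\frac{2g_n}{1-\overline{z_n}z}=2g_n\mathbf{K}_{z_n}(z).
\]
So your isomorphism argument, which is valid, transfers the angle and Riesz conditions to the vectors $g_n\mathbf{K}_{z_n}=g_n/(1-\overline{z_n}z)$. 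It does not give $g_n(1+z)/(1-\overline{z_n}z)$, as you conclude to match the printed statement.

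The factor cannot be reinserted afterwards. $M_{1+z}$ is bounded and injective on $H^2$ but not bounded below, so it does not preserve Riesz basic sequences. For example, take $N=1$, $z_n=-1+2^{-n}$ ($n\ge 1$), which is a Carleson sequence, and $g_n=\sqrt{1-|z_n|^2}$. Then $\widetilde\EE(H^2)=\ell^2$ by Shapiro--Shields, and $\{g_n\mathbf{K}_{z_n}\}$ is a Riesz sequence. But $\|(1+z)g_n\mathbf{K}_{z_n}\|^2$ is the Poisson integral of $|1+\zeta|^2=2+2\re\zeta$ at $z_n$, i.e.\ $2+2\re z_n=2^{1-n}\to 0$. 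Hence $\{g_n(1+z)/(1-\overline{z_n}z)\}$ is not a Riesz basic sequence, and the third item in that form is not equivalent to the first.

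The corrected equivalence of the first and third items needs no transfer at all. We have $\widetilde\EE f=(\langle f,g_n\mathbf{K}_{z_n}\rangle)_n$, and such a coefficient map takes $H^2(\DD;\CC^N)$ onto $\ell^2$ exactly when its adjoint $(c_n)\mapsto\sum_n c_ng_n\mathbf{K}_{z_n}$ is bounded and bounded below. The half-plane theorem is genuinely needed only for the second item.

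There are also some smaller points.
\begin{itemize}
\item Your normalisation is the right one, and the printed $\|g_k\|^2(1-|z_k|^2)$ is a misprint, not a choice of convention. With $G_k(x)=\langle x,g_k\rangle$ fixed, testing boundedness of $\widetilde\EE$ on $f=g_k\mathbf{K}_{z_k}$ already gives $\|g_k\|^2\le\|\widetilde\EE\|^2(1-|z_k|^2)$. The paper's own $N=1$ remark and McPhail's theorem confirm that $\|g_k\|^2/(1-|z_k|^2)$ is what must be bounded above and below.
\item $M$ carries $|s_n-s_m|/|s_n+\overline{s_m}|$ to $|z_n-z_m|/|1-\overline{z_m}z_n|$. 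So the sets $\Lambda_m(r)$ correspond only with that definition, not with the printed $|z_n-z_m|/|z_n-\overline{z_m}|$.
\item You apply the Laplace transform, not its inverse, and $\LL(e^{-s_nt})=1/(s+s_n)$. The kernel $1/(s+\overline{s_n})$ comes from $e^{-\overline{s_n}t}$, which is the function representing $\phi\mapsto\langle(\LL\phi)(s_n),\tilde g_n\rangle$.
\end{itemize}
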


For $N=1$, where the $g_k$ are scalars, this reduces to saying that $(z_k)$ is a Carleson sequence
and the $|g_k|/\sqrt{1-|z_k|}$ are bounded above and below (the condition on angle is vacuous if
the $(z_k)$ are a Carleson sequence since $|z_n - \overline{z_m}|<|1-\overline{z_m}z_n|$). This agrees with McPhail's
Theorem \ref{thm:mcphail}.
\\

\subsection*{An explicit expression: the bivariable case} Let us write out in detail the interpolation question linked with
the condition for $\{T^n u,T^n v: n \ge 0\}$ to be  a frame.

The mapping
\[
\begin{array}{rcl}
\ell^2(\mathbb{C}^2) & \longrightarrow & H \\[4pt]
(a_n,b_n)_n & \longmapsto & \displaystyle \sum_{n} a_n T^n u + b_n T^n v
\end{array}
\]
should be bounded and surjective. Writing $f(z)=\sum_{k=0}^\infty a_k z^k$ and $g(z)=\sum_{k=0}^\infty b_k z^k$, with $f,g \in H^2$, we
have
\[
\sum_{n=0}^\infty  a_n T^n u+b_n T^n v = \sum_{k=0}^\infty f(\lambda_k)u_k z^k + g(\lambda_k) v_k z^k,
\]
so the interpolation problem is
$$f(\lambda_k)u_k+  g(\lambda_k)v_k=c_k,$$
for every $k\geq 0$, which fits into the framework of Theorem~\ref{thm:interpold}.\\

\section{Carleson frames and the M\"untz--Sz\'asz condition}\label{sec:3}

As pointed out in the Introduction, Carleson frames (or Alcamota frames), namely, frames satisfying conditions (1)-(4) in Theorem \ref{thm:diagonal}, exhibit excessive redundancy. Motivated by the recent work  in \cite{ChristensenHasannasabPhilippStoeva2024} and \cite{KrishtalMiller2026}, the authors in \cite{ACKM25} conjectured that if $b\in\ell^{2}$, $\sigma(D)\subset (0,1)$ the orbit $\{D^{\ell}b:\ell=0,1,\dots\}$ forms a Carleson frame for $\ell^{2}$ and  $\Lambda \subset (0,\infty)$ is a set satisfying the M\"untz--Sz\'asz condition
\begin{equation}\label{MS condition-2}
\sum_{\lambda \in \Lambda} \frac{1}{\lambda} = \infty,
\end{equation}
then the system $\{D^{\lambda} b : \lambda \in \Lambda\}$ is a frame for $\ell^2$.}

\begin{rem}{\rm
It is easy to see that \eqref{MS condition-2} by itself is not sufficient. Indeed, let $D$ be a diagonal operator such that $D\varphi_k=\mu_k \varphi_k$ for $\{\varphi_k\}_{k\geq 0}$ an
orthonormal basis of an infinite dimensional separable complex Hilbert space $H$. Suppose that $\{D^n u:\; n\geq 0\}$ is a frame for some $u \in H$, so that $\{\mu_k\}_{k\geq 0}$ is a Carleson sequence; we shall also assume that $0< \mu_k < 1$ for each $k$.

Now take a sequence $\Lambda=\{\lambda_n\}_{n\geq 0}$ such that $ \lambda_n>0$ for each $n$ and
$\lambda_n \to 1$. We will take the $\lambda_n$ distinct, and we see that
$\sum_{n=0}^\infty 1/\lambda_n = \infty.$ (Later we shall see the stronger result that even taking {\em subsequences of $\NN$}
satisfying the M\"untz--Sz\'asz condition is not enough to guarantee a frame.)

We claim that for every vector $v \in H$, say, $v = \sum_{k=0}^\infty v_k \varphi_k$, the
sequence $\{D^{\lambda_n} v:\; n\geq 0\}$ is not a frame. Clearly we may assume that $v \ne 0$, so let $k$ be chosen such that $v_k \ne 0$.
Now $\langle \varphi_k, D^n v \rangle_H = \ol {v_k} \mu_k^{\lambda_n}$ and
so
\[
\sum_{n=0}^\infty |\langle \varphi_k, D^n v \rangle_H| =\sum_{n=0}^\infty |v_k| \mu_k^{\lambda_n} = \infty,
\]
since $\mu_k^{\lambda_n} \to \mu_k$.

}
\end{rem}

Our next result states establishes that $\{D^{\lambda}b:\lambda\in\Lambda\}$ is a frame for $\ell^2$ if and only if $\{t^{\lambda}\}_{\lambda\in\Lambda}$ is a frame in a associated $L^2(\nu)$ space. Recall that if  \(X\) is a measurable space and \(x \in X\),  the \emph{delta measure} (or \emph{Dirac measure}) at \(x\), denoted by \(\delta_x\), is the measure defined by
\[
\delta_x(A) =
\begin{cases}
1, & \text{if } x \in A, \\
0, & \text{if } x \notin A,
\end{cases}
\]
for every measurable set \(A \subseteq X\). Equivalently, $\delta_x(A) = \mathbf{1}_A(x),$ where \(\mathbf{1}_A\) is the indicator function of the set \(A\).

\begin{thm}\label{prop-ms}
Let $D$ be a diagonal operator on $\ell^2$ with spectrum
$\sigma(D)=\{\mu_k:k\ge 0\}\subset(0,1)$ and $\{\phi_k\}_{k\geq 0}$ an orthonormal basis such that $D\phi_k=\mu_k\phi_k$ for every $k$.
Let $b\in\ell^2$ and assume that $\{D^{\ell} b:\ell=0,1,2,\dots\}$ is a Carleson frame for $\ell^2$. If $\Lambda=\{\lambda_k\}_{k\geq 0}$ is any sequence of non-negative real numbers, then $\{D^\lambda b:\lambda\in\Lambda\}$
is a frame for $\ell^2$ if and only if $\{t^{\lambda}\}_{\lambda\in\Lambda}$ is a frame in $L^2(\nu)$ where $\nu$ is the discrete measure
\[
\nu= \sum_{k\ge 0} (1-\mu_k^2)\,\delta_{\mu_k}
\quad\text{on } [0,1).
\]
\end{thm}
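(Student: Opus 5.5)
The plan is to reduce everything to a unitary change of coordinates combined with a bounded, boundedly invertible diagonal multiplier. Write $b=\sum_k b_k\phi_k$. Since $\{D^\ell b\}$ is a Carleson frame, condition (4) of Theorem~\ref{thm:diagonal} (in the coordinates of the eigenbasis $\{\phi_k\}$) gives
\[
b_k=m_k\sqrt{1-\mu_k^2}, \qquad 0<C_1\le |m_k|\le C_2<\infty .
\]
For $x=\sum_k x_k\phi_k\in\ell^2$ and $\lambda\ge 0$ we have $D^\lambda b=\sum_k \mu_k^\lambda b_k\phi_k$, where $\mu_k^\lambda$ is well defined because $\mu_k\in(0,1)$. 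Hence
\[
\langle x, D^\lambda b\rangle=\sum_k x_k\,\ol{m_k}\,\mu_k^{\lambda}\,\sqrt{1-\mu_k^2}.
\]

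Next we define the map $U:\ell^2\to L^2(\nu)$ by $(Ux)(\mu_k)=x_k/\sqrt{1-\mu_k^2}$. Since the $\mu_k$ are distinct (Carleson sequences consist of distinct points), $L^2(\nu)$ is just the space of functions on the atoms $\{\mu_k\}$ with norm $\sum_k|g(\mu_k)|^2(1-\mu_k^2)$. So $U$ is unitary. Put $h(\mu_k)=m_k$, which is a bounded function with bounded inverse, and write $M_h$ for multiplication by $h$. Then
\[
\langle x,D^\lambda b\rangle_{\ell^2}=\sum_k (Ux)(\mu_k)\,\ol{h(\mu_k)}\,\mu_k^\lambda\,(1-\mu_k^2)=\langle M_{\ol h}Ux,\; t^\lambda\rangle_{L^2(\nu)}.
\]
The functions $t^\lambda$ do lie in $L^2(\nu)$, since $\nu$ is finite: $\sum_k(1-\mu_k^2)<\infty$ by the Blaschke condition implied by Carleson's condition.

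The operator $A=M_{\ol h}U$ is bounded and invertible, with $\|A\|\le C_2$ and $\|A^{-1}\|\le C_1^{-1}$. Therefore
\[
\sum_{\lambda\in\Lambda}|\langle x,D^\lambda b\rangle|^2=\sum_{\lambda\in\Lambda}|\langle Ax,t^\lambda\rangle|^2 .
\]
If $\{t^\lambda\}$ is a frame with bounds $A_1,A_2$, this sum is bounded between $A_1C_1^2\|x\|^2$ and $A_2C_2^2\|x\|^2$. Conversely, applying the same identity with $x=A^{-1}g$ transfers frame bounds from $\{D^\lambda b\}$ to $\{t^\lambda\}$. The equivalence then follows immediately, with the frame bounds controlled by $C_1,C_2$.

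The main obstacle I expect is bookkeeping rather than depth. First, the form of condition (4) of Theorem~\ref{thm:diagonal} with $\sqrt{1-|\lambda_k|^2}$ has to be applied in the $\phi_k$ coordinates; McPhail's form $\sqrt{1-|\lambda_k|}$ is comparable, so either normalization works up to constants. Second, one has to confirm that the atoms are distinct, so that $U$ is genuinely unitary onto $L^2(\nu)$. No interpolation theory is needed beyond extracting the size of $b_k$.
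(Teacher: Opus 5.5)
Your proposal is correct and follows the paper's proof. Your operator $A=M_{\ol h}U$ acts by $(Ax)(\mu_k)=x_k\ol{b_k}/(1-\mu_k^2)$, which is exactly the paper's map $J$, and the identity $\langle x,D^\lambda b\rangle=\langle Ax,t^\lambda\rangle_{L^2(\nu)}$ together with $\|Ax\|\asymp\|x\|$ transfers the frame bounds in both directions (you are also slightly more careful than the paper in noting explicitly that the $\mu_k$ are distinct and that $t^\lambda\in L^2(\nu)$ because $\nu$ is finite).
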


The proof is based on an spectral model in which $D$ is represented as multiplication by $t\in(0,1)$ on a Hilbert space determined by $(D,b)$.
In this model, the vectors $D^{\lambda}b$ correspond precisely to the functions $t^{\lambda}$.

\begin{proof}
Let us write $b=\sum_k b_k\phi_k$ and note that, since $\{D^\ell b\}_{\ell\ge 0}$ is a Carleson frame and $\sigma(D)\subset(0,1)$,
the frame characterization for diagonal normal operators yields:
\begin{itemize}
\item $(\mu_k)$ is an interpolating sequence in $\mathbb D$,
\item there exist constants $C_1,C_2>0$ such that
\begin{equation}\label{uk}
C_1 \le \frac{|b_k|}{\sqrt{1-\mu_k^2}} \le C_2
\quad\text{for all } k.
\end{equation}
\end{itemize}
For $x=\sum_{k\geq 0} x_k\phi_k\in\ell^2$ and $\lambda>0$, note that
\[
\ip{x}{D^\lambda b}
= \sum_{k\ge 0} x_k\overline{b_k}\,\mu_k^\lambda.
\]
Define $J:\ell^2\to L^2(\nu)$ by
\[
(Jx)(\mu_k) := \frac{x_k\overline{b_k}}{1-\mu_k^2}.
\]
Using \eqref{uk}, $J$ is bounded and bounded below, and
\[
\norm{Jx}_{L^2(\nu)}^2
= \sum_{k\geq 0} \frac{|b_k|^2}{1-\mu_k^2}|x_k|^2
\asymp \norm{x}_{\ell^2}^2.
\]
Moreover,
\[
\ip{x}{D^\lambda b}
= \sum_{k\geq 0} (1-\mu_k^2)(Jx)(\mu_k)\,\mu_k^\lambda
= \ip{Jx}{t^\lambda}_{L^2(\nu)}.
\]
Hence, $\{D^\lambda b\}_{\lambda\in\Lambda}$ is a frame for $\ell^2$ if and only if $\{t^\lambda\}_{\lambda\in\Lambda}$ is a frame for $L^2(\nu)$, as we claimed.
\end{proof}

With Theorem \ref{prop-ms} at hand, in order to approach the conjecture in \cite{ACKM25}, our task is reduced to studying when $\{t^\lambda\}_{\lambda\in\Lambda}$ is a frame for $L^2(\nu)$ assuming that $\{t^\ell\}_{\ell\in\mathbb{N}}$ is. For such a goal, let us write for the moment $w_k = 1 - \mu_k^2$ for $k\geq 0$. Note that the frame inequality would require
\[
A \|f\|_{L^2(\nu)}^2
\le
\sum_{\lambda \in \Lambda}
|\langle f, t^\lambda \rangle|^2.
\]
Testing on the normalized atom
\begin{equation}\label{ek}
e_k =
\frac{\mathbf{1}_{\{\mu_k\}}}{\sqrt{w_k}},
\end{equation}
we get
$
\|e_k\|_{L^2(\nu)} = 1
$
and
\[
\langle e_k, t^\lambda \rangle
=
\sqrt{w_k}\,\mu_k^\lambda.
\]
Therefore a necessary condition for the lower frame bound is
\[
\inf_{k\geq 0}
w_k
\sum_{\lambda \in \Lambda}
\mu_k^{2\lambda}
> 0.
\]
That is,
\[
\inf_{k\geq 0}
(1 - \mu_k^2)
\sum_{\lambda \in \Lambda}
\mu_k^{2\lambda}
> 0.
\]
In general, $\{t^\lambda\}_{\lambda \in \Lambda}$ is a frame for $L^2(\nu)$ exactly when there are constants $A,B > 0$ such that, for every finitely supported scalar sequence $(a_k)$,
\[
A \sum_{k\geq 0} |a_k|^2 w_k
\le
\sum_{\lambda \in \Lambda}
\left|
\sum_{k\geq 0} a_k w_k \mu_k^\lambda
\right|^2
\le
B \sum_{k\geq 0} |a_k|^2 w_k.
\]
After normalizing \(c_k = a_k \sqrt{w_k}\),
\[
A \sum_{k\geq 0} |c_k|^2
\le
\sum_{\lambda \in \Lambda}
\left|
\sum_{k\geq 0} c_k \sqrt{1 - \mu_k^2}\, \mu_k^\lambda
\right|^2
\le
B \sum_{k\geq 0} |c_k|^2.
\]
So, a necessary pointwise consequence is
\begin{equation}\label{pointwise condition}
0 < \inf_{k\geq 0} (1 - \mu_k^2)\sum_{\lambda \in \Lambda} \mu_k^{2\lambda}
\le
\sup_{k\geq 0} (1 - \mu_k^2)\sum_{\lambda \in \Lambda} \mu_k^{2\lambda}
< \infty.
\end{equation}
Indeed, \eqref{pointwise condition} allows us to show that even when $\Lambda=\{\lambda_n\}_{n\geq 0}\subset \mathbb{N}$,  \eqref{MS condition-2} does not suffice to prove that  $\{D^{\lambda}b:\lambda\in\Lambda\}$ is a frame for $\ell^{2}$.
First, we need an estimate.

\begin{lem}\label{lem:xsx}
For $x>0$ let
\[
S(x)= \sum_{n=2}^\infty \exp(-nx\log(n)) .
\]
Then  $\lim_{x \to 0+} xS(x) =0$.
\end{lem}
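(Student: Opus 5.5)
The idea is to split the sum at the point where $n\log n$ becomes comparable to $1/x$. Below that point we use the trivial bound $1$ per term; above it the terms decay geometrically. Each term is $\exp(-nx\log n)=n^{-nx}$, which is at most $1$.

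Fix $x\in(0,1)$ small and choose a cutoff $N=N(x)\ge 2$, to be specified. For $n\le N$ we bound each term by $1$, so the first block contributes at most $N$. For $n>N$ we use $\log n\ge \log N$, so that
\[
\exp(-nx\log n)\le \exp(-nx\log N).
\]
Summing the resulting geometric series gives
\[
\sum_{n>N}\exp(-nx\log n)\le \frac{e^{-Nx\log N}}{1-e^{-x\log N}}\le \frac{1}{1-e^{-x\log N}}.
\]
Once $x\log N\le 1$, the elementary bound $1-e^{-y}\ge y/2$ for $0<y\le 1$ applies. With it,
\[
xS(x)\le xN+\frac{2}{\log N}.
\]

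It remains to choose $N$ so that both terms tend to zero as $x\to0+$. Take $N=\lfloor x^{-1/2}\rfloor$. Then:
\begin{itemize}
\item $xN\le x^{1/2}\to 0$;
\item $\log N\sim \tfrac12\log(1/x)\to\infty$, so $2/\log N\to 0$;
\item $x\log N\le x\log(1/x)\to 0$, so the condition $x\log N\le 1$ holds for small $x$.
\end{itemize}
Hence $\limsup_{x\to0+}xS(x)=0$, which is the claim.

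The only point needing care is the choice of cutoff. A naive comparison with $\sum_n e^{-nx}\sim 1/x$ yields merely $xS(x)=O(1)$. The gain comes precisely from the factor $\log N\to\infty$ in the exponent of the tail. Any cutoff with $xN\to0$ and $N\to\infty$ works, for instance $N\approx x^{-1/2}$.
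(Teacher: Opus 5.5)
Your proof is correct and follows essentially the same approach as the paper: split the sum at a cutoff $N$, bound the head by $N$, and use $\log n\ge\log N$ on the tail to gain the factor $1/\log N$. The differences are cosmetic. The paper bounds the tail by an integral via the substitution $u=xt\log t$ and takes $N=\lfloor 1/(x\log(1/x))\rfloor$ to balance the two pieces, whereas you sum a geometric series and take $N=\lfloor x^{-1/2}\rfloor$; both give $xS(x)=O(1/\log(1/x))$.
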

\begin{proof}
Fix $x \in (0,1/4)$ and  take $N=\lfloor 1/(x \log (1/x)) \rfloor$.
Then
\[
S(x)= \sum_{n=2}^N \exp(-nx\log(n)) + \sum_{n=N+1}^\infty \exp(-nx\log(n))= S_1(x)+S_2(x), \quad \hbox{say}.
\]
For $2\le n\le N$, we have $\exp(-nx\log(n))\le 1$, so
$S_1 (x)\le N\le 1/(x \log (1/x))$. For the tail of the series,
\[
S_2(x) \le \int_{N-1}^\infty\exp(-xt\log t) \, dt.
\]
We take $u=xt \log t$ so that
\[
du=x(\log t+1)\,  dt \ge x \log t \, dt \ge x \log  (N-1) \, dt
\]
and we have as an upper bound
\[
S_2(x) \le \frac{1}{x \log (N - 1)}\int_{0}^\infty \exp(-u) \ du,
\]
so that $S_2(x) \le 1/(x \log (N-1)) \le C/(x \log (1/x))$ for some $C>0$ independent of $x$.

Putting these together, we see that $xS(x)$ is bounded by a constant times $1/\log(1/x)$ as $x \to 0+$, and hence
it tends to $0$.
\end{proof}

\begin{exm}
If we now take $\lambda_n=\lceil n \log n \rceil$ or $\lambda_n=p_n$, the $n$th prime, for $n \ge 2$, both of which are greater than
$n \log n$ (in the case of $p_n$ this is Rosser's theorem~\cite{rosser}), then
each sequence satisfies the M\"untz--Szász  condition $\sum_{n=2}^\infty 1/\lambda_n = \infty$ and, by Lemma~\ref{lem:xsx}, the condition
\[
\lim_{x \to 0+} x\sum_{n=2}^\infty \exp(-\lambda_n x)=0 .
\]
Now if $\mu_k \to 1-$, then
\[
\inf_k
(1-\mu_k^2)
\sum_{n=2}^\infty
\mu_k^{2\lambda_n}
=0,
\]
since, with $x=1-\mu_k^2$ small, we have
$\mu_k^2 = 1-x \le \exp(-x)$.

Then testing the frame inequality on $e_k$ (defined in \eqref{ek}) gives
\[
\sum_{n\geq 2} |\langle e_k, t^{\lambda_n} \rangle|^2
=
(1-\mu_k^2)\sum_{n\geq 2} \mu_k^{2\lambda_n}
\to 0,
\]
while
\[
\|e_k\|_{L^2(\nu)} = 1.
\]
So no positive lower frame bound can hold.
\end{exm}

As a byproduct of our previous discussion, we provide a necessary condition for $\{t^\lambda\}_{\lambda \in \Lambda}$ to be a frame for $L^2(\nu)$ whenever $\{t^\ell\}_{\ell \in \mathbb{N}}$ is.

\begin{proposition}
Let $\{\mu_k\}_{k\geq 0}\subset(0,1)$ be an interpolating sequence for $H^2(\mathbb{D})$, and let $\nu$ the measure on $[0,1)$
\[
\nu=\sum_{k\ge0}(1-\mu_k^2)\delta_{\mu_k}.
\]
Assume that $\{t^\ell\}_{\ell \in \mathbb{N}}$ is a frame in $L^2(\nu)$ and $\Lambda=\{\lambda_n\}_{n\ge0}\subset (0,+\infty)$. If the family $\{t^{\lambda_n}\}_{n\geq 0}$ is a frame for $L^2(\nu)$ then both conditions:
\begin{enumerate}
\item \textbf{the Müntz--Szász condition}
\[
\sum_{n\ge0}\frac1{\lambda_n}=\infty,
\]
and
\item \textbf{a Carleson-type condition}
\[0<
\inf_{k\ge0}
(1-\mu_k^2)\sum_{n\ge0}\mu_k^{2\lambda_n}
\le
\sup_{k\ge0}
(1-\mu_k^2)\sum_{n\ge0}\mu_k^{2\lambda_n}
<\infty,
\]
\end{enumerate}
hold.
\end{proposition}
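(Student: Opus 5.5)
Condition (2) follows directly from the frame inequalities applied to the normalized atoms $e_k$ of \eqref{ek}. Suppose $\{t^{\lambda_n}\}$ is a frame with bounds $A,B$. Since $\|e_k\|_{L^2(\nu)}=1$ and $\langle e_k,t^{\lambda_n}\rangle=\sqrt{w_k}\,\mu_k^{\lambda_n}$ with $w_k=1-\mu_k^2$, we obtain for every $k$
\[
A\le (1-\mu_k^2)\sum_{n\ge0}\mu_k^{2\lambda_n}\le B .
\]
Taking the infimum and supremum over $k$ gives (2); this is exactly the pointwise consequence \eqref{pointwise condition} already derived above.

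For the Müntz--Szász condition (1), I would argue by contradiction and deduce it from the lower bound in (2). We may assume $\Lambda$ has no accumulation point in $(0,\infty)$. Indeed, if infinitely many $\lambda_n$ lie in a bounded set, the sum $\sum_n 1/\lambda_n$ diverges trivially and (1) already holds.

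So assume $\sum_n 1/\lambda_n<\infty$. For $x>0$ set $F(x)=\sum_n e^{-2\lambda_n x}$. With $\mu_k^2=e^{-s_k}$, where $s_k=-\log\mu_k^2\sim 1-\mu_k^2$ as $\mu_k\to1$, the lower bound in (2) reads $\inf_k s_k F(s_k/2)\cdot(1+o(1))>0$. Two facts are needed here. First, $\mu_k\to1$ along a subsequence: this holds because $(\mu_k)$ is an infinite interpolating sequence in $(0,1)$, so it cannot accumulate inside $\DD$. Second, we must show $xF(x)\to0$ as $x\to0^+$. Split the sum at $\lambda_n\le M$ and $\lambda_n>M$:
\[
xF(x)\le x\,\#\{n:\lambda_n\le M\}+\sum_{\lambda_n>M}x e^{-2\lambda_n x}\le x\,\#\{n:\lambda_n\le M\}+\frac{1}{2e}\sum_{\lambda_n>M}\frac{1}{\lambda_n},
\]
using $ye^{-y}\le e^{-1}$ with $y=2\lambda_n x$. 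Given $\varepsilon>0$, first choose $M$ so that the tail is below $\varepsilon$, then let $x\to0$ to make the first term small. Hence $xF(x)\to0$. Applying this along $x=s_k$ with $\mu_k\to1$ gives $(1-\mu_k^2)\sum_n\mu_k^{2\lambda_n}\to0$, which contradicts the positive infimum in (2). Therefore (1) holds.

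The main obstacle is the second fact, namely handling the region $x\to0$ uniformly, but the splitting above settles it cleanly. The only other care needed is justifying the passage between $1-\mu_k^2$ and $-\log\mu_k^2$, which is immediate since their ratio tends to $1$. Note that the upper frame bound is not used for (1), and the hypothesis that $\{t^\ell\}$ is a frame enters only through the fact that $\mu_k\to1$.
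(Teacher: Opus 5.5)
Your proof is correct. Part (2) is exactly the paper's argument: test the frame inequalities on the normalized atoms $e_k$. For (1), however, you take a genuinely different route.

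The paper argues through completeness. Assuming $\sum_n 1/\lambda_n<\infty$, it appeals to a M\"untz--Sz\'asz-type non-density statement on the support $\{\mu_k\}$ of $\nu$, which contradicts the fact that a frame is complete.

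You instead deduce (1) from the lower bound in (2). Your splitting at $M$, combined with $ye^{-y}\le e^{-1}$, shows that $\sum_n 1/\lambda_n<\infty$ forces $xF(x)\to0$ as $x\to0^+$. This is the same kind of Abelian estimate as Lemma~\ref{lem:xsx}, which the paper uses in its Example, but here in the easier convergent regime. Then $1-\mu_k^2\le s_k$ gives $(1-\mu_k^2)\sum_n\mu_k^{2\lambda_n}\le s_kF(s_k/2)\to0$ along $\mu_k\to1$. There is one small slip: you should evaluate at $x=s_k/2$, not $x=s_k$. Also, the inequality $1-e^{-s}\le s$ is all you need, not the asymptotic ratio.

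Your approach is elementary and self-contained. It shows that (1) is actually a consequence of the Carleson-type lower bound in (2), so (1) is redundant as a separate condition. It also avoids needing a M\"untz theorem for a countable set accumulating only at $1$, in the $L^2(\nu)$ norm. That version is much less standard than the classical interval version on which the paper's appeal implicitly rests.

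The paper's approach, granted that density theorem, gives something your argument cannot. It yields necessity of (1) already for mere completeness of $\{t^{\lambda_n}\}$ in $L^2(\nu)$, with no frame bounds involved. Your argument genuinely needs the lower frame bound.
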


\begin{proof}
Let us assume that $\{t^{\lambda_n}\}_{n\geq 0}$ is a frame for $L^2(\nu)$. Then it is complete, and therefore the closed linear span of
$\{t^{\lambda_n}:n\geq 0\}$ is all of $L^2(\nu)$. Arguing by contradiction, if the Müntz--Szász condition does not hold, namely,
$\sum_{n\geq 0}\frac1\lambda_n<\infty,$ then the Müntz--Szász theorem implies that
$$
\overline{\operatorname{span}}\{t^{\lambda_n}:n\geq 0\}
$$
is not dense on any infinite subset of $[0,1)$ with an accumulation point in $[0,1]$ away from $0$. Since the support of $\nu$ is infinite and has accumulation point $1$, this contradicts completeness in $L^2(\nu)$ and therefore, the Müntz--Szász condition is necessary.

Next, let \(A,B>0\) be frame bounds for \(\{t^{\lambda_n}\}_{n\ge0}\) in $L^2(\nu)$. Arguing as in \eqref{pointwise condition} for
$e_k=\frac1{\sqrt{w_k}}\mathbf 1_{\{\mu_k\}}$, $k\geq 0$, it follows
\[
0<
\inf_{k\ge0}
w_k\sum_{n\ge0}\mu_k^{2\lambda_n}
\le
\sup_{k\ge0}
w_k\sum_{n\ge0}\mu_k^{2\lambda_n}
<\infty,
\]
which yields the statement.
\end{proof}

\begin{remark}
Let us denote \(w_k=1-\mu_k^2\) for \(k\ge0\), so
$\nu=\sum_{k\ge0}w_k\delta_{\mu_k},
$
and the space \(L^2(\nu)\) is naturally identified with \(\ell^2\) by the unitary map
$$\begin{array}{lccc}\label{def U}
U:&L^2(\nu)&\to&\ell^2,\\
& f& \to &  Uf=\{f_k\}_{k\ge0}
\end{array}
$$
where
\begin{equation}\label{def U}
f_k=\sqrt{w_k}\,f(\mu_k), \quad (k\ge0).
\end{equation}
Indeed,
\[
\|Uf\|_{\ell^2}^2
=
\sum_{k\ge0}|f_k|^2
=
\sum_{k\ge0}|f(\mu_k)|^2w_k
=
\|f\|_{L^2(\nu)}^2.
\]

\noindent For \(\lambda_n\in\Lambda\), we have
\[
\langle f,t^{\lambda_n}\rangle_{L^2(\nu)}
=
\sum_{k\ge0}f(\mu_k)\mu_k^{\lambda_n}w_k
=
\sum_{k\ge0}f_k \sqrt{w_k} \mu_k^{\lambda_n}.
\]
Hence the frame problem in \(L^2(\nu)\) for \(\{t^{\lambda_n}\}_{n\ge0}\) is equivalent to the frame problem in \(\ell^2\) for the vectors
\begin{equation}\label{frame problem}
b_{\lambda_n}
=
\left(\sqrt{w_k}\mu_k^{\lambda_n}\right)_{k\ge0}, \qquad  (n\geq 0).
\end{equation}
Indeed,
\begin{equation}\label{frame problem--}
\langle f,t^{\lambda_n}\rangle_{L^2(\nu)}
=
\langle Uf,b_{\lambda_n}\rangle_{\ell^2}.
\end{equation}
\end{remark}

\section{Iterated multiplication and weighted composition operators}\label{sec:4}

Take $u \in H^2(\DD)$ and $\phi \in H^\infty$.
Cheshmavar~\cite{chesh} posed the question when the sequence $(\phi^n u)_{n\geq 0}$
forms a frame in $H^2(\DD)$, but did not give a complete answer. We shall solve this problem using weighted composition operators.

It was assumed that $\|\phi\|_\infty \le 1$, and indeed this follows from the uniform boundedness theorem,
since if we have a frame then $(\langle \phi^n u, v \rangle)$ is a bounded sequence for every
$v \in H^2$ and hence $(\phi^n u)$ is bounded in norm.

Now we use the condition given in \eqref{eq:bddsurj}, and this says that the
mapping $T^*: \ell^2 \to H$ should be bounded and surjective. Writing this
as a  mapping     $H^2 \to H^2$ it says that the operator given by
\[
W: \sum_{n=0}^\infty a_n z^n \mapsto \sum_{n=0}^\infty a_n \phi^n u
\]
should be bounded and surjective. But $W$ is simply the weighted composition operator $W_{\phi,u}:=M_u C_\phi$.

The following result is given by Gunatillake \cite[Thm.~2.0.1]{gun}, written here in the
notation used above. Note that $W_{\phi,u}$ is injective (except in   trivial cases)
and so boundedness and surjectivity together are equivalent to invertibility.

\begin{thm}
The operator $W_{\varphi, u}$ on $H^2(\DD)$, defined by
\[
   (W_{\varphi, u} f)(z) =  u(z) f(\varphi(z)),
\]
 is invertible on $H^2(\DD)$ if and only if $\phi$ is an automorphism of $\DD$
and $u$ is both bounded and bounded away from zero on $\DD$.
\end{thm}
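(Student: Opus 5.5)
The sufficiency direction is a direct computation. Suppose $\phi$ is an automorphism of $\DD$, so $\phi(z)=e^{i\theta}(a-z)/(1-\ol a z)$, and $u$ is bounded with $\inf_{\DD}|u|>0$. Then $1/u\in H^\infty$, so $M_u$ and $M_{1/u}$ are bounded mutually inverse operators on $H^2$. Composition with an automorphism is bounded on $H^2$ by Littlewood subordination together with the standard change-of-variables estimate. Since $\phi^{-1}$ is again an automorphism, $C_\phi$ is invertible with inverse $C_{\phi^{-1}}$. Hence $W_{\phi,u}=M_uC_\phi$ is invertible with inverse $C_{\phi^{-1}}M_{1/u}$.

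For necessity, assume $W=W_{\phi,u}$ is bounded and invertible on $H^2$. The plan has four steps.

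\textbf{Step 1.} Since $W1=u$, we have $u\in H^2$ and $u\not\equiv 0$. Since $Wz=u\phi$, we have $\phi\in H^2$.

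\textbf{Step 2: $\phi$ is univalent.} Use the adjoint action on reproducing kernels: $W^*k_w=\ol{u(w)}\,k_{\phi(w)}$. Suppose $\phi(w_1)=\phi(w_2)$ with $w_1\neq w_2$, both points chosen off the zero set of $u$. Then $W^*\bigl(\ol{u(w_2)}k_{w_1}-\ol{u(w_1)}k_{w_2}\bigr)=0$. This contradicts injectivity of $W^*$, because distinct kernels are linearly independent. Zeros of $u$ are isolated and $\phi$ is open when nonconstant. If $\phi$ were constant, the range of $W$ would be one-dimensional. So $\phi$ is injective off a discrete set, and hence injective everywhere by Hurwitz-type arguments.

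\textbf{Step 3: $u$ is bounded below, and $u$ and $\phi$ are bounded.} Use invertibility of $W^*$: $\|W^*k_w\|\ge c\|k_w\|$ gives
\[
|u(w)|^2\,\frac{1-|w|^2}{1-|\phi(w)|^2}\ \ge\ c^2 .
\]
Boundedness gives the reverse inequality with constant $\|W\|^2$. In particular $u$ has no zeros. Since $|u|$ dominates a positive constant times $\bigl((1-|\phi(w)|^2)/(1-|w|^2)\bigr)^{1/2}$, which is bounded below by Schwarz–Pick once $\phi$ maps into $\DD$, it remains to show $\phi(\DD)\subseteq\DD$.

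To do so, write $W^{-1}$ applied to $z$. Surjectivity gives $f$ with $u\,f\circ\phi=z$, and also $g$ with $u\,g\circ\phi=1$. Therefore $\phi\circ$ something relates these, and $\phi=(u\phi)/u$ with $|\phi|\le 1$ can be extracted from the frame / uniform boundedness remark before the theorem ($\|\phi\|_\infty\le1$). For boundedness of $u$, use the upper kernel estimate together with $\phi(\DD)\subseteq\DD$: $|u(w)|^2\le \|W\|^2(1-|\phi(w)|^2)/(1-|w|^2)$. This is controlled once we know $\phi$ is an automorphism, since then the ratio is $|\phi'(w)|$, which is bounded.

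\textbf{Step 4 (main obstacle): $\phi$ is onto $\DD$.} I would argue via the range. If $\phi(\DD)\neq\DD$, pick a point $b\in\DD\setminus\phi(\DD)$ in the boundary of $\phi(\DD)$, and approximate it by points $\phi(w_n)\to b$. Necessarily $|w_n|\to1$, because $\phi$ is an open univalent map and $b\notin\phi(\DD)$. The lower kernel bound gives $|u(w_n)|^2\ge c^2(1-|b|^2+o(1))/(1-|w_n|^2)\to\infty$. The upper bound gives the same order of growth, so this alone yields no contradiction. Instead, use surjectivity applied to $k_b$: $W^*$ is onto, so $k_b$ is a limit of images $W^*h$. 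Alternatively, consider $W^{-1}$, which is itself of weighted composition type $f\mapsto (f/u)\circ\phi^{-1}$. Requiring this to be defined on all of $\DD$ forces $\phi^{-1}$ to be defined on $\DD$, i.e.\ $\phi(\DD)=\DD$. Making this last identification rigorous, namely showing that the inverse of an invertible weighted composition operator is again one, via the multiplicativity $W(fg)\,u=W(f)W(g)$, is the step I expect to require the most care.
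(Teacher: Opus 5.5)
\textbf{There is a genuine gap at Step 4, the surjectivity of $\phi$.} Your sufficiency argument and Steps 1--3 are essentially sound: univalence of $\phi$ follows from injectivity of $W^*$ on kernels, $u$ is zero-free, and $W^*k_w=\ol{u(w)}\,k_{\phi(w)}$ gives
\[
c^2\,(1-|\phi(w)|^2)\ \le\ (1-|w|^2)\,|u(w)|^2\ \le\ \|W\|^2\,(1-|\phi(w)|^2),\qquad c=\|W^{-1}\|^{-1}.
\]
But surjectivity of $\phi$ is exactly where the content lies. Without it you get neither the automorphism conclusion nor your boundedness of $u$, which uses $|\phi'|$ bounded.

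Your proposed route is circular as written. You say $W^{-1}$ is $f\mapsto (f/u)\circ\phi^{-1}$ and ``defined on all of $\DD$'', but that presupposes $\phi^{-1}$ is defined on $\DD$. The identity $u\,W(fg)=W(f)W(g)$ is never turned into an argument identifying $W^{-1}$ as a weighted composition operator. Separately, your Step 3 attempt to derive $\phi(\DD)\subseteq\DD$ from the frame/uniform-boundedness remark does not apply, since that remark concerns frames $(\phi^n u)$. Instead, $\phi(\DD)\subseteq\DD$ is a standing hypothesis: it is needed for $f\circ\phi$ to make sense, and already for $k_{\phi(w)}$ in Step 2.

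\textbf{The missing idea.} You were one line away when you wrote that the lower bound ``yields no contradiction''. You compared it only with the upper bound. The relevant comparison is with the intrinsic growth of $H^2$ functions. Since $u=W1\in H^2$, we have $(1-|w|^2)|u(w)|^2\to 0$ as $|w|\to 1$. To see this, approximate $u$ by a polynomial $p$ and use $|(u-p)(w)|\le\|u-p\|_2(1-|w|^2)^{-1/2}$; equivalently, normalized kernels tend weakly to $0$.

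In your own setup, $\phi(w_n)\to b\in\DD\setminus\phi(\DD)$ with $|w_n|\to 1$. The left inequality above then gives $(1-|w_n|^2)|u(w_n)|^2\ge c^2(1-|\phi(w_n)|^2)\to c^2(1-|b|^2)>0$, which is a contradiction.

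More cleanly, the same inequality shows $|\phi(w)|\to 1$ as $|w|\to 1$, so $\phi:\DD\to\DD$ is proper. Hence $\phi(\DD)$ is closed in $\DD$ as well as open, so $\phi(\DD)=\DD$. Together with Step 2, $\phi$ is an automorphism, and your Step 3 then gives $c^2|\phi'|\le|u|^2\le\|W\|^2|\phi'|$, so $u$ is bounded and bounded below.

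The paper itself gives no proof of this statement; it only cites Gunatillake. With Step 4 repaired as above, your kernel-function argument would be a complete, self-contained proof.
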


Write $\mathbf{K}_w(z)=\frac{1}{1-\overline{w}z}$ for the reproducing kernel in $H^2(\DD)$, so
$f(w)=\langle f,\mathbf{K}_w\rangle$.
It is well known that for
 every $w\in\mathbb D$,
\[
W_{\varphi, u}^*\,\mathbf{K}_w \;=\; \overline{ u(w)}\,\mathbf{K}_{\varphi(w)}.
\]
With Parseval frames in mind, let us recall the following theorem by Bourdon and Narayan \cite{BourdonNarayan2010}.

\begin{thm}\label{thm:BN}
The operator $W_{\varphi, u}$ on $H^2(\DD)$, defined by
\[
   (W_{\varphi, u} f)(z) =  u(z) f(\varphi(z)),
\]
is unitary if and only if $\varphi$ is a disk automorphism and
\[
    u(z) = c \, \frac{\mathbf{K}_p(z)}{\|\mathbf{K}_p\|},
\]
where $\varphi(p) = 0$, $\mathbf{K}_p$ is the reproducing kernel at $p$, and $|c| = 1$.
\end{thm}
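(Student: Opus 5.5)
Since $W_{\varphi,u}$ is a weighted composition operator on $H^2(\DD)$, I will first compute its adjoint on reproducing kernels and compare norms. For the \emph{if} direction, I will take $\varphi$ an automorphism with $\varphi(p)=0$, $u=c\,\mathbf{K}_p/\|\mathbf{K}_p\|$, and write $\varphi = \lambda\,\varphi_p$ with $\varphi_p(z)=(p-z)/(1-\overline{p}z)$ and $|\lambda|=1$. For $f\in H^2$ I will use the change of variables $\zeta=\varphi(e^{i\theta})$ on the circle. The Jacobian of $\varphi$ on $\mathbb T$ is $|\varphi'(e^{i\theta})| = (1-|p|^2)/|1-\overline p e^{i\theta}|^2$, which is exactly $|u(e^{i\theta})|^2 = (1-|p|^2)|\mathbf{K}_p(e^{i\theta})|^2$. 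This gives
\[
\|W_{\varphi,u}f\|^2=\frac1{2\pi}\int_0^{2\pi}|f(\varphi(e^{i\theta}))|^2|\varphi'(e^{i\theta})|\,d\theta=\|f\|^2,
\]
so $W_{\varphi,u}$ is an isometry. Surjectivity then follows because the inverse is of the same form: $W_{\varphi,u}^{-1}=W_{\varphi^{-1},\,1/(u\circ\varphi^{-1})}$, and $1/u(\varphi^{-1}(z))$ is again a normalized kernel at $\varphi^{-1}(0)$ times a unimodular constant, which is a direct computation with Möbius maps. Hence $W_{\varphi,u}$ is unitary.

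For the \emph{only if} direction, I will assume $W=W_{\varphi,u}$ is unitary. It is then invertible, so by Gunatillake's theorem stated above $\varphi$ is an automorphism and $u\in H^\infty$ is bounded away from zero. Let $p=\varphi^{-1}(0)$. The key identity is $W^*\mathbf{K}_w=\overline{u(w)}\mathbf{K}_{\varphi(w)}$. Since a unitary preserves norms, $\|W^*\mathbf K_w\|=\|\mathbf K_w\|$, which gives
\[
|u(w)|^2\,\frac{1}{1-|\varphi(w)|^2}=\frac1{1-|w|^2},\qquad w\in\DD .
\]
Using the identity $1-|\varphi(w)|^2=(1-|p|^2)(1-|w|^2)/|1-\overline p w|^2$, valid for automorphisms with $\varphi(p)=0$, this becomes $|u(w)|=\sqrt{1-|p|^2}/|1-\overline p w| = |\mathbf K_p(w)|/\|\mathbf K_p\|$. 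Thus $u\|\mathbf K_p\|/\mathbf K_p$ is analytic, nonvanishing and of constant modulus one on $\DD$, so it is a unimodular constant $c$ by the open mapping theorem.

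The main obstacle is the surjectivity and the change-of-variables bookkeeping in the \emph{if} direction, in particular checking that the boundary Jacobian matches $|u|^2$ exactly. I would verify this on the orthonormal basis, or equivalently check $W^*W=I$ on kernels via $\langle W^*\mathbf K_w,W^*\mathbf K_z\rangle=\overline{u(w)}u(z)\mathbf K_{\varphi(w)}(\varphi(z))=\mathbf K_w(z)$. This kernel check turns the argument into a purely algebraic Möbius identity and avoids the integrals.
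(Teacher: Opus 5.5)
The paper does not prove this theorem; it quotes it from Bourdon--Narayan, so there is no internal proof to match. Your argument is correct and self-contained.

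The necessity half follows the same chain the paper uses later around \eqref{eq:psiw}: norm preservation of $W^*$ on kernels, Gunatillake's theorem to get an automorphism, and the M\"obius identity for $1-|\varphi(w)|^2$. The difference is that you close it yourself with the constant-modulus/open-mapping step, where the paper defers to Bourdon--Narayan.

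For sufficiency, your boundary change of variables is the right mechanism, and it is genuinely needed. For $p\neq 0$, neither factor of $W_{\varphi,u}=M_uC_\varphi$ is unitary on its own. Indeed $\|C_\varphi^*\mathbf{K}_0\|=\|\mathbf{K}_{\varphi(0)}\|=(1-|\varphi(0)|^2)^{-1/2}>1$, and $|u|$ is not identically $1$ on $\mathbb{T}$. So the factor-by-factor reasoning in the proof of Corollary~\ref{cor:unitary} cannot be used. Your identity $|\varphi'(e^{i\theta})|=|u(e^{i\theta})|^2$ encodes exactly the cancellation between the two factors.

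There are two harmless slips.
\begin{enumerate}
\item With $\varphi=\lambda\varphi_p$ one computes $1/u(\varphi^{-1}(z))=\bar c\,\sqrt{1-|p|^2}/(1-\overline{\lambda p}\,z)$. This is a normalized kernel at $\varphi(0)=\lambda p$ (the zero of $\varphi^{-1}$), not at $\varphi^{-1}(0)=p$. Surjectivity only needs this function to lie in $H^\infty$, which it does, since $|u|\ge\sqrt{1-|p|^2}/(1+|p|)$ on $\DD$.
\item The kernel identity $\langle W^*\mathbf{K}_w,W^*\mathbf{K}_z\rangle=\langle\mathbf{K}_w,\mathbf{K}_z\rangle$ proves $WW^*=I$ (that is, $W^*$ is an isometry), not $W^*W=I$. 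To get unitarity along that route you must add injectivity or invertibility of $W$, which your inverse formula supplies.
\end{enumerate}
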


\begin{prop}[Explicit evaluation formula]\label{prop:pointwise-adjoint}
For $f\in H^2$ and $w\in\mathbb D$,
\[
(W_{\varphi, u}^* f)(w)
=\big\langle f,\, u(\cdot)\,\mathbf{K}_w(\varphi(\cdot))\big\rangle_{H^2}
=\int_{0}^{2\pi}
f(e^{it})\,\overline{ u(e^{it})}\,
\frac{1}{1-w\,\overline{\varphi(e^{it})}}\;\frac{dt}{2\pi}.
\]
\end{prop}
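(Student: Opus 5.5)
The plan is to evaluate $W_{\varphi,u}^*f$ at $w$ through the reproducing kernel, and then rewrite the resulting inner product as an integral on the circle.

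First, for $f\in H^2$ and $w\in\DD$, the reproducing property gives
\[
(W_{\varphi,u}^*f)(w)=\langle W_{\varphi,u}^*f,\mathbf{K}_w\rangle=\langle f, W_{\varphi,u}\mathbf{K}_w\rangle .
\]
By the definition of $W_{\varphi,u}$ we have $W_{\varphi,u}\mathbf{K}_w=u(\cdot)\,\mathbf{K}_w(\varphi(\cdot))$. This yields the first equality, provided this function lies in $H^2$. That holds whenever $W_{\varphi,u}$ is bounded, which is the standing assumption under which the adjoint makes sense. It can also be seen directly: $\mathbf{K}_w\circ\varphi$ is bounded by $1/(1-|w|)$ since $\|\varphi\|_\infty\le 1$, so $u\cdot(\mathbf{K}_w\circ\varphi)\in H^2$ because $u\in H^2$.

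Second, I convert the $H^2$ inner product into an integral over the boundary, using $\langle g,h\rangle=\int_0^{2\pi} g(e^{it})\overline{h(e^{it})}\,dt/2\pi$ for the boundary values of $g,h\in H^2$. I take $h=u\cdot(\mathbf{K}_w\circ\varphi)$. The only point needing care is that its boundary function equals $u(e^{it})\,\overline{\cdots}$ evaluated with the radial limits of $\varphi$. This holds almost everywhere, because $\varphi$ has radial limits a.e. and $\mathbf{K}_w$ is continuous on the closed disc. Then
\[
\overline{\mathbf{K}_w(\varphi(e^{it}))}=\overline{1/(1-\overline{w}\varphi(e^{it}))}=1/(1-w\overline{\varphi(e^{it})}),
\]
which gives exactly the displayed integral.

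The only mild obstacle is justifying the boundary-value identification for the product $u\cdot(\mathbf{K}_w\circ\varphi)$. I handle it by noting that $\mathbf{K}_w\circ\varphi\in H^\infty$, so the product's radial limit is the product of the radial limits a.e. The rest is routine.
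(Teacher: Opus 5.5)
Your proposal is correct and follows the paper's argument: you use the reproducing property, move the adjoint across to get $\langle f, W_{\varphi,u}\mathbf{K}_w\rangle$, identify $W_{\varphi,u}\mathbf{K}_w = u\cdot(\mathbf{K}_w\circ\varphi)$, and then write the $H^2$ inner product as a boundary integral. The extra points you check are valid details that the paper leaves implicit: that $u\cdot(\mathbf{K}_w\circ\varphi)\in H^2$ because $\mathbf{K}_w\circ\varphi\in H^\infty$, and that its boundary function is $u(e^{it})\,\mathbf{K}_w(\varphi(e^{it}))$ almost everywhere.
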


\begin{proof}
By the reproducing property,
\[
(W_{\varphi, u}^* f)(w)
=\langle W_{\varphi, u}^* f,\,\mathbf{K}_w\rangle
=\langle f,\,W_{\varphi, u}\mathbf{K}_w\rangle
=\langle f,\, u(\cdot)\,\mathbf{K}_w(\varphi(\cdot))\rangle,
\]
and the boundary integral representation of the $H^2(\DD)$ inner product yields the second equality.
\end{proof}

Cowen gave the following adjoint formula for linear-fractional symbols (see \cite[Thm.~9.2]{cowenmaccluer}).

\begin{thm} \label{thm:cowen}
Suppose $\varphi$ is linear-fractional,
\[
\varphi(z)=\frac{az+b}{cz+d},\qquad ad-bc\neq 0,
\]
and define
\[
\sigma(z)=\frac{\overline{a}\,z-\overline{c}}{-\overline{b}\,z+\overline{d}},
\qquad
h(z)=cz+d,
\qquad
g(z)=\frac{1}{-\overline{b}\,z+\overline{d}}.
\]
Then for the composition operator $C_\varphi$ on $H^2(\DD)$,
\[
C_\varphi^* \;=\; M_g\,C_\sigma\,M_h^*.
\]
\end{thm}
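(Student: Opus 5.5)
The plan is to verify the identity $C_\varphi^*=M_g C_\sigma M_h^*$ on the reproducing kernels $\mathbf{K}_w$, $w\in\DD$, and then conclude by density. Their linear span is dense in $H^2(\DD)$, since a function orthogonal to every $\mathbf{K}_w$ vanishes identically. So once both sides are known to be bounded operators agreeing on every $\mathbf{K}_w$, they coincide. The left-hand side is handled by the special case $u\equiv 1$ of the adjoint formula $W_{\varphi,u}^*\mathbf{K}_w=\overline{u(w)}\,\mathbf{K}_{\varphi(w)}$ recalled above, which gives $C_\varphi^*\mathbf{K}_w=\mathbf{K}_{\varphi(w)}$.

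The first step is to check that the right-hand side makes sense as a bounded operator on $H^2(\DD)$. Since $h(z)=cz+d$ is a polynomial, $M_h$ is bounded and so is $M_h^*$. For $g$ and $\sigma$ I would use the standing hypothesis that $\varphi$ is a self-map of $\DD$ (implicit, as $C_\varphi$ acts on $H^2(\DD)$).

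\emph{The map $\sigma$.} A direct computation shows $\sigma(z)=1/\overline{\varphi^{-1}(1/\overline z)}$. Take $z\in\DD$, so $\zeta=1/\overline z$ lies outside the closed disc, and put $\eta=\varphi^{-1}(\zeta)$. Then $\eta\notin\DD$, since otherwise $\zeta=\varphi(\eta)\in\DD$. Also $\eta\notin\partial\DD$, since otherwise $\varphi(\eta)\in\overline{\DD}$ by continuity. Hence $|\eta|>1$ and $\sigma(z)\in\DD$. The degenerate values $\zeta=\infty$ or $\eta=\infty$ are handled by reading the formula on the Riemann sphere. Thus $\sigma$ is an analytic self-map of $\DD$, and $C_\sigma$ is bounded by Littlewood's subordination principle.

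\emph{The function $g$.} The same argument applied at $z=0$ gives $\sigma(0)=-\overline c/\overline d\in\DD$, so in particular $d\neq 0$. To see that $g$ is bounded on $\DD$, I need $|b|<|d|$. This holds because $\varphi(\infty)$-type considerations fail to apply here, but $\varphi^{-1}(\infty)=-d/c$ and $\varphi(0)=b/d\in\DD$ does. Thus the pole $\overline d/\overline b$ of $g$ lies outside $\overline{\DD}$, so $g\in H^\infty$ and $M_g$ is bounded. In the same spirit, $cw+d\neq0$ for $w\in\DD$, since $\varphi$ has no pole in $\DD$.

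The second step is the kernel computation, which is purely algebraic. First, $M_h^*\mathbf{K}_w=\overline{h(w)}\,\mathbf{K}_w$. Next, $C_\sigma\mathbf{K}_w=1/(1-\overline w\,\sigma(z))$. Multiplying by $g$ gives
\[
\frac{g(z)}{1-\overline w\sigma(z)}=\frac{1}{(\overline d+\overline c\,\overline w)-z(\overline b+\overline a\,\overline w)}.
\]
Multiplying further by $\overline{h(w)}=\overline c\,\overline w+\overline d$ yields
\[
\frac{1}{1-z\,\overline{(aw+b)/(cw+d)}}=\mathbf{K}_{\varphi(w)}(z),
\]
which matches the left-hand side.

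I expect the main obstacle to be the boundedness bookkeeping: showing $\sigma(\DD)\subset\DD$ and $|b|<|d|$ from the self-map hypothesis. The kernel identity itself is routine. If the Möbius-inversion argument above for $\sigma$ becomes awkward in the degenerate cases, I would instead deduce $\sigma(\DD)\subset\DD$ from the Schwarz–Pick type inequality for the linear-fractional map $\varphi$.
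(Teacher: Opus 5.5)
Your proof is correct. The paper gives no argument of its own here: it simply quotes Cowen's formula from \cite[Thm.~9.2]{cowenmaccluer}, and your argument is essentially the standard proof behind that citation. It has two parts: the kernel identity $M_g C_\sigma M_h^*\mathbf{K}_w=\overline{h(w)}\,g\cdot(\mathbf{K}_w\circ\sigma)=\mathbf{K}_{\varphi(w)}=C_\varphi^*\mathbf{K}_w$, and boundedness, via $\sigma(z)=1/\overline{\varphi^{-1}(1/\overline z)}$ (so $\sigma(\DD)\subset\DD$ because $\varphi(\overline{\DD})\subset\overline{\DD}$) together with $g,h\in H^\infty$. Compared with the bare citation, your version has the merit of making explicit the hypothesis $\varphi(\DD)\subset\DD$, which the paper's statement leaves implicit in the phrase ``$C_\varphi$ on $H^2(\DD)$''.

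Your justification of $|b|<|d|$ is garbled and should be cleaned up. The remarks about $\varphi(\infty)$ and $\varphi^{-1}(\infty)=-d/c$ play no role there. The whole reason is that $d\neq 0$ and $\varphi(0)=b/d\in\DD$, so the pole $\overline d/\overline b$ of $g$ lies outside $\overline{\DD}$. Likewise, $cw+d\neq 0$ on $\DD$ (indeed $|c|<|d|$) follows at once from $\sigma(0)=-\overline c/\overline d\in\DD$, which you have already established.
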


\begin{cor}[Unitary case]\label{cor:unitary}
Assume $\varphi$ is a disk automorphism and $ u(z)=c\,\dfrac{\mathbf{K}_p(z)}{\|\mathbf{K}_p\|}$ with $\varphi(p)=0$ and $|c|=1$. Then $W_{\varphi, u}$ is unitary, and
\[
W_{\varphi, u}^* = W_{\varphi, u}^{-1} = M_{\eta}\,C_{\varphi^{-1}},
\qquad
\eta(z)=\frac{1}{ u(\varphi^{-1}(z))}\quad (\text{unimodular a.e.}).
\]
\end{cor}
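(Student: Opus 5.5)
The first assertion, that $W_{\varphi,u}$ is unitary, is exactly the sufficiency direction of Theorem~\ref{thm:BN} (Bourdon--Narayan), since $\varphi$ is a disk automorphism and $u=c\,\mathbf{K}_p/\|\mathbf{K}_p\|$ with $\varphi(p)=0$, $|c|=1$. For a unitary operator we have $W_{\varphi,u}^*=W_{\varphi,u}^{-1}$, so it only remains to identify the inverse explicitly. I will do this by writing down a candidate inverse and checking it, rather than computing the adjoint from Theorem~\ref{thm:cowen}.

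The plan is to check directly that $M_\eta C_{\varphi^{-1}}$ is a two-sided inverse of $W_{\varphi,u}=M_uC_\varphi$, first on a pointwise level. For $f\in H^2$,
\[
(M_\eta C_{\varphi^{-1}} W_{\varphi,u} f)(z)=\eta(z)\,u(\varphi^{-1}(z))\,f(\varphi(\varphi^{-1}(z)))=f(z),
\]
and similarly
\[
(W_{\varphi,u}M_\eta C_{\varphi^{-1}} f)(z)=u(z)\,\eta(\varphi(z))\,f(\varphi^{-1}(\varphi(z)))=u(z)\,\frac{1}{u(z)}\,f(z)=f(z).
\]
This uses that $u$ has no zeros in $\DD$. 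Indeed $\mathbf{K}_p(z)=1/(1-\overline p z)$ never vanishes, and since $|p|<1$ it is bounded above and below on $\ol\DD$. Hence $\eta=1/(u\circ\varphi^{-1})$ is analytic and bounded on $\DD$, and $M_\eta C_{\varphi^{-1}}$ is a bounded operator on $H^2$, because $\varphi^{-1}$ is again an automorphism. Since $W_{\varphi,u}$ is invertible, its inverse is unique, so it equals $M_\eta C_{\varphi^{-1}}$, and by unitarity this is also $W_{\varphi,u}^*$.

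The remaining point is that $\eta$ is unimodular a.e.\ on the circle. Here I expect the only real work. I will use $\|\mathbf{K}_p\|^2=1/(1-|p|^2)$, which gives
\[
|u(\zeta)|^2=\frac{1-|p|^2}{|1-\overline p\zeta|^2}\qquad(|\zeta|=1).
\]
This is the Poisson kernel at $p$, which equals $|\varphi'(\zeta)|$ for automorphisms $\varphi$ with $\varphi(p)=0$. I will write $\varphi=\gamma\,(p-z)/(1-\overline pz)$ with $|\gamma|=1$, so that $|\varphi'(\zeta)|=(1-|p|^2)/|1-\overline p\zeta|^2$. Consequently $|u(\varphi^{-1}(\zeta))|^2=|\varphi'(\varphi^{-1}(\zeta))|=1/|(\varphi^{-1})'(\zeta)|$, and $|\eta|^2=|(\varphi^{-1})'|$ on the circle, which is not identically $1$ in general.

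So literal unimodularity fails unless $p=0$. The parenthetical must therefore be interpreted consistently with unitarity, since $|\eta|^2$ is exactly the Jacobian $|(\varphi^{-1})'|$ that makes $f\mapsto\eta\,(f\circ\varphi^{-1})$ isometric on $L^2(\partial\DD)$. I will verify this change-of-variables identity on the boundary, which gives isometry independently of Theorem~\ref{thm:BN}, and note that $\eta$ is unimodular precisely when $p=0$, i.e.\ when $\varphi$ is a rotation.
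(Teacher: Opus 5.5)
Your proposal is correct. The identification of the inverse is the same computation as in the paper. The paper checks only $W_{\varphi,u}M_\eta C_{\varphi^{-1}}=M_{u\,(\eta\circ\varphi)}$ and then relies on invertibility, whereas you verify both compositions.

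The real difference is in how unitarity is justified, and there your route is the sound one. The paper argues that $W_{\varphi,u}=M_uC_\varphi$ is a product of two unitaries. It asserts that $C_\varphi$ is unitary for every disk automorphism and that $u$ is unimodular a.e.; neither holds when $p\neq 0$. Indeed $\langle C_\varphi 1, C_\varphi z\rangle=\langle 1,\varphi\rangle=\overline{\varphi(0)}\neq 0$, so $C_\varphi$ is unitary only for rotations. Also, on the circle $|u|^2=(1-|p|^2)/|1-\overline{p}\zeta|^2$, so $M_u$ is not an isometry. The two factors fail to be isometric in exactly compensating ways. Your identity $|u|^2=|\varphi'|$ on $\partial\DD$, together with the boundary change of variables, captures precisely this. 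Citing Theorem~\ref{thm:BN}, as you do first, is also legitimate.

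Your remark on the parenthetical is likewise correct. On the circle $|\eta|^2=|(\varphi^{-1})'|$, which is the Poisson kernel at $\varphi(0)$. This is identically $1$ only when $p=0$, i.e.\ when $\varphi$ is a rotation and $u$ is a unimodular constant. So ``unimodular a.e.'' should be replaced by this Jacobian identity, which is exactly what makes $f\mapsto\eta\,(f\circ\varphi^{-1})$ isometric.
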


\begin{proof}
For a disk automorphism $\varphi$, $C_\varphi$ is unitary on $H^2(\DD)$; the chosen $ u$ is unimodular a.e., so $M_ u$ is unitary. Hence $W_{\varphi, u}$ is unitary and its adjoint equals its inverse. The explicit inverse follows from
\[
W_{\varphi, u}\,M_{\eta}\,C_{\varphi^{-1}}
= M_ u\,C_\varphi\,M_{\eta}\,C_{\varphi^{-1}}
= M_{ u\,(\eta\circ \varphi)}\,I,
\]
and choosing $\eta=1/( u\circ\varphi^{-1})$ gives the identity operator.
\end{proof}

It is of independent interest to consider the question when the adjoint of a weighted composition operator is an isometry (not necessarily unitary). One obvious necessary condition is
that it is isometric on reproducing kernels $\mathbf{K}_w$ for $w \in \DD$, where
\[
\mathbf{K}_w(z)= \frac{1}{1-\overline w z} \qquad (z \in \DD).
\]
It is well known and easily verified that
\[
W_{\phi, u}^* \mathbf{K}_w = \overline{ u(w)} \mathbf{K}_{\phi(w)}
\]
and so a necessary condition is that
\begin{equation}\label{eq:psiw}
| u(w)|^2 = \frac{1-|\phi(w)|^2}{1-|w|^2} \qquad (w \in \DD).
\end{equation}
Since operators with isometric adjoints are surjective (so in this case, invertible) we
must also have $\phi$ an automorphism and $ u$ both bounded and bounded away from zero.

Now if $\phi$ is an automorphism with a zero at $p$, then by an easy calculation this condition \eqref{eq:psiw} becomes
\[
| u(w)|^2 = \frac{1-|p|^2}{|1-\overline p w|^2} \qquad (w \in \DD),
\]
which, since $ u$ is analytic, implies by Theorem~\ref{thm:BN} that $W$ and $W^*$ are
unitary.

This can also be expressed by saying that we have a reproducing kernel hypothesis for
the isometry condition of an adjoint of a weighted composition operator.

We therefore have the following result.

\begin{thm}
The adjoint of a weighted composition operator on $H^2(\DD)$ is an isometry if and only
if it is unitary.
\end{thm}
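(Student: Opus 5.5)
The "if" direction is immediate, since a unitary operator $W^*$ is in particular an isometry. For the converse, suppose $W_{\phi,u}^*$ is an isometry on $H^2(\DD)$ (the weighted composition operator being bounded). The plan is to assemble the three observations made just before the statement into a single chain: surjectivity forces invertibility, invertibility forces $\phi$ to be an automorphism, and then the kernel identity \eqref{eq:psiw} pins down $u$ exactly as in Theorem~\ref{thm:BN}.

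\emph{Step 1: invertibility.} If $W^*$ is an isometry then $W W^* = I$, so $W$ is surjective. We need $W$ injective as well. If $u\not\equiv 0$ and $\phi$ is nonconstant, then $u\cdot(f\circ\phi)=0$ forces $f\circ\phi=0$ on the open set where $u\neq0$, hence $f\equiv 0$ on the open set $\phi(\cdot)$. The degenerate cases are excluded directly. If $u\equiv0$, then $W^*=0$, which is not an isometry. If $\phi$ is constant, then $W$ has rank at most one, so it cannot be surjective on the infinite-dimensional space $H^2$. Hence $W$ is bijective and therefore invertible. By Gunatillake's theorem, $\phi$ is then an automorphism and $u$ is bounded and bounded away from zero.

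\emph{Step 2: the kernel identity.} Use $W^*\mathbf{K}_w=\overline{u(w)}\,\mathbf{K}_{\phi(w)}$ and $\|\mathbf{K}_w\|^2=1/(1-|w|^2)$. The isometry condition on kernels gives \eqref{eq:psiw}. Write $\phi=\gamma\,\frac{p-z}{1-\overline p z}$ with $|\gamma|=1$ and $\phi(p)=0$. The standard identity
\[
1-|\phi(w)|^2=\frac{(1-|p|^2)(1-|w|^2)}{|1-\overline p w|^2}
\]
then turns \eqref{eq:psiw} into
\[
|u(w)|=\frac{\sqrt{1-|p|^2}}{|1-\overline p w|}=\left|\frac{\mathbf{K}_p(w)}{\|\mathbf{K}_p\|}\right|.
\]

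\emph{Step 3: conclusion.} Consider the quotient $u/(\mathbf{K}_p/\|\mathbf{K}_p\|)$. It is analytic on $\DD$, because $\mathbf{K}_p$ has no zeros there, and it has constant modulus $1$. By the open mapping theorem it is a unimodular constant $c$, so $u=c\,\mathbf{K}_p/\|\mathbf{K}_p\|$. Theorem~\ref{thm:BN} now shows that $W_{\phi,u}$ is unitary, and hence so is $W^*$.

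The only delicate point is Step 1: one must verify injectivity so that Gunatillake's invertibility theorem applies, and in particular rule out the degenerate cases. The remaining steps are direct computations.
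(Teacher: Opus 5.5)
Your proof is correct and follows essentially the same route as the paper: isometry on kernels gives \eqref{eq:psiw}, surjectivity plus injectivity gives invertibility so that Gunatillake's theorem makes $\phi$ an automorphism, and the explicit form of $|u|$ then leads to Theorem~\ref{thm:BN}; you also supply the injectivity argument and the open-mapping identification of $u$ that the paper leaves implicit. As a side remark, your Step~1 already finishes the proof on its own: once $W$ is invertible and $WW^*=I$, you get $W^{-1}=W^*$, hence $W^*W=I$ and $W$ is unitary, so Steps~2--3 and the appeals to Gunatillake and Bourdon--Narayan are not actually needed.
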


\section{Model spaces and their generalizations}\label{sec:6}

Let $\theta \in H^\infty$ be an inner function. The associated model space is
defined by
\[
K_\theta= H^2 \ominus \theta H^2,
\]
the orthogonal complement of the shift-invariant subspace $\theta H^2$ in the
Hardy space $H^2$. By Beurling's theorem \cite{Beu}, every nontrivial closed subspace of $H^2$ invariant under
the unilateral shift has the form $\theta H^2$ for some inner function $\theta$.
Consequently, the spaces $K_\theta$ are invariant under the backward shift operator
\[
S^*f(z)=\frac{f(z)-f(0)}{z},
\]
and the nontrivial invariant subspaces of $S^*$ are precisely the model spaces
$K_\theta$. A good reference for model spaces is the book~\cite{GMR}.

For $w \in \DD$ we shall write $k^\theta_w \in K_\theta$ for the reproducing kernel
for evaluation at $w$. That is $\langle f, k^\theta_w \rangle = f(w)$ for all $f \in K_\theta$.

\subsection{The case of a single operator}

The paper \cite{CHP20}
contains the following observations. Note that saying $(A_1,u_1)$ is similar to $(A_2,u_2)$ means that there is an invertible $V$
such that
$A_2=V A_1 V^{-1}$ and $u_2=A u_1$.

\begin{itemize}
\item The system $(A^n u)_{n =0}^\infty $ forms a frame if and only if $(A,u)$ is similar to
either\\ (i) the compressed shift
$(S_\theta, k_0^\theta)$ acting on the model space $K_\theta$,
with $k_0^\theta$ the reproducing kernel at $0$, that is, $k_0^\theta(z)=1-\ol{\theta(0)}\theta(z)$,
or\\ (ii) the standard shift   $(S, e_0)$ acting on $H^2(\DD)$, with $e_0(z)=1$.
\item In the first case, $S_\theta$ is a completely non-unitary $C_0$ contraction, and
$\theta$ is its minimal function (unique to within a unimodular constant). That is,
$\theta$ is an inner function and
$\theta(S_\theta)=0$.
\end{itemize}

Note that we have an identity for the spectrum $\sigma(A)=\sigma(S_\theta)=\sigma(\theta)$
(the Liv\v{s}ic--Moeller theorem \cite{nikolski-shift}) in case (i), and $\sigma(A)=\ol\DD$ in case (ii), although this does not
determine $\theta$ uniquely in case (i), unlike the minimal function.

\subsection{Particular cases}\label{subsec:pc}

\begin{enumerate}
\item For the frame $(A,u)$ with $A$ a diagonal operator,   discussed in Section~\ref{sec:2}, we either have $A$ similar to
a compressed shift $S_\theta$ or to
the shift $S$.
In the first case the operator is $C_0$ and its eigenvalues must form a Blaschke sequence
(clearly with no repeats). Further,
if the normalized eigenvectors are to form a Riesz basis then the eigenvalues are
a Carleson sequence.
The second case is impossible since $S$ has no eigenvalues.

\item The case when $A$ is similar to a direct sum of Jordan blocks
where the generalized eigenvectors form a Riesz basis
 corresponds to an inner function $\theta$ with repeated roots.

Observations:
\begin{itemize}
\item We can only have one block for each  eigenvalue, since $f \in \ker(S_\theta-\lambda I)$
gives $zf=\theta h + \lambda f$ for some $h \in H^2$ and $f=\theta h/(z-\lambda)$.
If this is in $K_\theta$ we have $\theta(\lambda)=0$ and, dividing by $\theta$, $h/(z-\lambda) \in \ol{H^2_0}$ so $h$ is a constant. That is, the eigenspace is one-dimensional, being spanned by $\theta/(z-\lambda)$.

It can be seen by considering the Taylor series
that the eigen\-spaces of $S_\theta^*$ are also one-dimensional, the eigenvectors being reproducing kernels, and the eigenvalues are $\ol\lambda$, where $\theta(\lambda)=0$.

\item Also, $S_\theta$ is a $C_0$ operator, with minimal function $\theta$, which, since
each zero of
$\theta$ has finite order, means that the blocks are all finite.

\item It does not seem to be known whether generalized eigenvectors can form a
Riesz sequence (if the zeros form a Carleson set and only finitely-many zeros are repeated,
then this will happen), so we do not know whether it is possible to have infinitely-many
nontrivial blocks if there is a frame.

\end{itemize}

\item
For the multiplication frame $(M_\phi, f)$ discussed in Section~\ref{sec:3},
we see that with $\phi$ an automorphism we are in situation (ii) above (similarity with the shift); the more complicated situation (i) does not occur.\\

\item As discussed in \cite{GPR-Hardy}, on writing $\mathcal{H}_1$ for the Hardy operator
on $L^2(0,1)$ defined by
$$(\mathcal{H}_1 f)(x) = \frac{1}{x} \int_{0}^{x} f(t) dt, \quad 0 < x < 1,$$
the fact that $I-\mathcal{H}_1^*$ is unitarily equivalent to the
unilateral shift $S$ on $H^2(\DD)$ implies that it can generate frames. This is situation (ii) once more.
\end{enumerate}

\subsection{Parseval frames}

Starting with $(S, e_0)$ acting on $H^2(\DD)$ we have $\langle f, S^n e_0 \rangle = \hat f(n)$,
the $n$th Taylor coefficient,
so this is clearly a Parseval frame.

Moreover, for  $(S_\theta, k_0^\theta)$ acting on $K_\theta$ we have
\[
\langle f, S_\theta^n k_0^\theta \rangle = \langle (S^*)^n f, k_0^\theta\rangle = \hat f(n)
\]
once more, since $k_0^\theta$ is the reproducing kernel at $0$  for $K_\theta$.
Thus we have
\begin{cor}
Every frame of the form $(A^n u)_n$ is similar to a Parseval frame.
\end{cor}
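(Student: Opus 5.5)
The plan is to combine the similarity classification of \cite{CHP20} quoted above with the two computations just made, and then transport the Parseval property through the similarity. Suppose $(A^n u)_{n\ge 0}$ is a frame for a Hilbert space $H$. By the observation from \cite{CHP20}, there is a bounded invertible $V$ such that $VAV^{-1}=B$ and $Vu=v$. Here either $(B,v)=(S_\theta,k_0^\theta)$ on $K_\theta$ for some inner $\theta$, or $(B,v)=(S,e_0)$ on $H^2(\DD)$. Since $B^n v = VA^nV^{-1}Vu = VA^n u$, the frame $(A^n u)_n$ is carried by $V$ onto $(B^n v)_n$. That is, the two frames are similar in the usual frame-theoretic sense, the image of one under a bounded invertible operator being the other.

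Next I will verify that $(B^n v)_n$ is Parseval in each case. For $(S,e_0)$ this is immediate: $S^n e_0=z^n$ is an orthonormal basis of $H^2(\DD)$.

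For $(S_\theta,k_0^\theta)$ I will justify the identity $\langle f,S_\theta^n k_0^\theta\rangle=\widehat f(n)$ for $f\in K_\theta$. The steps are as follows.
\begin{enumerate}
\item Write $S_\theta=P_\theta S|_{K_\theta}$, so that $S_\theta^*=S^*|_{K_\theta}$; this uses that $K_\theta$ is $S^*$-invariant, by Beurling's theorem.
\item Deduce $(S_\theta^*)^n f=(S^*)^n f$, which again lies in $K_\theta$.
\item Use the reproducing property of $k_0^\theta$ in $K_\theta$ to evaluate $((S^*)^n f)(0)=\widehat f(n)$.
\end{enumerate}
It then follows that $\sum_n |\langle f,S_\theta^n k_0^\theta\rangle|^2=\sum_n|\widehat f(n)|^2=\|f\|_{H^2}^2=\|f\|^2_{K_\theta}$. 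Hence the analysis operator is an isometry, and the frame is Parseval.

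Combining the two steps, $(A^n u)_n=V^{-1}(B^n v)_n$ is the image of a Parseval frame under the invertible operator $V^{-1}$, which is the claimed similarity. I would also note that the Parseval frame obtained is again of the dynamical form, since it is the orbit of $v$ under $B$.

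The only point requiring care, and the main (mild) obstacle, is the notion of similarity of frames. If it is meant within $H$ itself rather than via an isomorphism onto another space, I would fix this using the canonical route. Let $S_{\rm fr}$ be the frame operator of $(A^n u)_n$. Then $(S_{\rm fr}^{-1/2}A^n u)_n$ is a Parseval frame in $H$. Moreover, $S_{\rm fr}^{-1/2}A^n u=(S_{\rm fr}^{-1/2}AS_{\rm fr}^{1/2})^n(S_{\rm fr}^{-1/2}u)$, so this Parseval frame is still an orbit of a single operator similar to $A$. This gives the corollary in either interpretation.
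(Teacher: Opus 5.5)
Your proposal is correct and follows the paper's argument. You invoke the \cite{CHP20} similarity of $(A,u)$ with $(S_\theta,k_0^\theta)$ or $(S,e_0)$, and then check that these model orbits are Parseval, for $S_\theta$ via $\langle f,S_\theta^n k_0^\theta\rangle=\langle (S^*)^n f,k_0^\theta\rangle=\widehat f(n)$. Your closing observation with $S_{\mathrm{fr}}^{-1/2}$ is also valid. By standard frame theory it proves the statement on its own, and the result is still an orbit of an operator similar to $A$.
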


A generalization of the model space approach described above is due to
Cabrelli, Molter and Su\'arez~\cite{CMS23}.
They give an answer to the following question:

{\em
Let $A$ be a bounded operator on a Hilbert space $H$ and $\{u_1,\ldots,u_m\}$ an
independent  set in  $H$. When is
$\{A^n u_k: n \ge 0, k=1,\ldots,m\}$   a frame?}

The answer can be expressed in terms of invariant subspaces of the iterated shift $S^m$
on $H^2$, and these in turn can be linked to the invariant subspaces of
the shift on the vector-valued Hardy space $H^2(\DD,\CC^m)$ by using the
 following commutative diagram:

\begin{align*}\begin{CD}
H^2 @> S^m  >>H^2\\
@V J   VV @ VV J \;V\\
H^2(\DD, \CC^m) @> S >> H^2(\DD, \CC^m)
\end{CD},  \end{align*}
where $J: H^2 \to H^2(\DD, \CC^m)$ is the unitary mapping
\[
J: \sum_{k=1}^m z^{k-1}f_k(z^m) \mapsto (f_1(z),f_2(z),\ldots, f_m(z)).
\]
for $f_1,\ldots, f_m \in H^2$ together with the
Beurling--Lax theorem (e.g. \cite[Thm.~3.1.7]{LOLS}).
Note that a function $\Phi \in H^\infty(\DD, \LL(\CC^r,\CC^m))$ is said to be {\em inner\/} if
$\Phi(e^{i\omega})$ is an isometry for almost all real $\omega$.

\begin{thm}\label{thm:kinvsub}
Let $K$ be a non-zero subspace of $H^2(\DD, \CC^m)$ that is invariant under the
shift $S$. Then there is an $r$ with $0 \le r \le m$ and an inner function
$\Phi \in H^\infty(\DD,  \LL(\CC^r,\CC^m))$ such that $K=\Phi H^2(\DD, \CC^r)$.
\end{thm}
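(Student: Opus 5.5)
This is the vector-valued Beurling--Lax theorem. The plan is to reduce it to a wandering-subspace (Wold) decomposition of the restricted shift, and then to identify the resulting isometric embedding with multiplication by an operator-valued inner function.

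The first step is to restrict the shift to $K$ and apply the Wold decomposition. Since $K$ is closed and $SK\subset K$, the restriction $V=S|_K$ is an isometry on $K$. It is pure, because $\bigcap_n S^nH^2(\DD,\CC^m)=\{0\}$ (the first $n$ Taylor coefficients of any element of $S^nH^2(\DD,\CC^m)$ vanish). Put $L=K\ominus SK$, the wandering subspace. The Wold decomposition then gives the orthogonal sum
\[
K=\bigoplus_{n\ge0}S^nL .
\]

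The second step is to bound the dimension of $L$, and this is where we get $r\le m$. Let $P_0$ be evaluation at $0$, that is, the projection onto the constants $\CC^m$. Suppose $x\in L$ and $P_0x=0$. Then $x=Sy$ with $y\in H^2(\DD,\CC^m)$. Moreover $y\in K$: if $y=a+b$ with $a\in K$ and $b\perp K$, then $Sb\perp SK$, and $Sb\perp L$ as well (I would check the latter via $\langle Sb,x\rangle=\langle b,S^*x\rangle$ and the decomposition of $K$). Hence $x\in SK\cap L=\{0\}$. A cleaner route to the same conclusion is to show directly that the map
\[
L\ni x\mapsto x(0)\in\CC^m
\]
is injective. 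Since $x\perp SK$ we have $x\perp S^n x$ for $n\ge1$, and similarly $S^jL\perp S^kL$ for $j\ne k$. Using this orthogonality I would compare Fourier coefficients: the inner functions $x_i$ built from an orthonormal basis of $L$ satisfy $\sum_i |x_i(e^{i\omega})|^2$-type identities, namely that the matrix $[x_1,\dots,x_r](e^{i\omega})$ has orthonormal columns a.e. This forces $r\le m$, since $r$ vectors that are orthonormal in $\CC^m$ can exist only when $r\le m$. The key computation for this is that $\langle S^nx_i,x_j\rangle=\delta_{n0}\delta_{ij}$ for $n\ge0$, and by conjugation for $n\le0$ too. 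Consequently the Fourier coefficients of $\langle x_i(e^{i\omega}),x_j(e^{i\omega})\rangle_{\CC^m}$ are $\delta_{ij}\delta_{n0}$, so
\[
\langle x_i(e^{i\omega}),x_j(e^{i\omega})\rangle_{\CC^m}=\delta_{ij}\quad\text{a.e.}
\]
This simultaneously proves that $L$ is finite-dimensional with $r=\dim L\le m$: any orthonormal family in $L$ of size $r'$ gives $r'$ orthonormal vectors in $\CC^m$ a.e.

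The third step constructs the inner function. Let $\Phi=[x_1,\dots,x_r]$ be the $m\times r$ matrix function whose columns are an orthonormal basis of $L$. By the identity above, $\Phi(e^{i\omega})$ is an isometry a.e., so $\|\Phi\|_\infty\le1$ and $\Phi$ is inner. For $c\in H^2(\DD,\CC^r)$ we have $\Phi c=\sum_n\sum_i \hat c_i(n)S^nx_i$, and by the orthonormality of $\{S^nx_i\}$ this is an isometry of $H^2(\DD,\CC^r)$ onto $\bigoplus_n S^nL=K$. Therefore $K=\Phi H^2(\DD,\CC^r)$. If $K\ne0$ then $L\ne0$ by purity, so $r\ge1$; in any case $0\le r\le m$ holds.

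I expect the main obstacle to be the a.e. isometry identity, specifically justifying the Fourier-coefficient argument for $\langle x_i,x_j\rangle_{\CC^m}\in L^1(\mathbb T)$ together with the fact that the shift is unitary multiplication by $e^{i\omega}$ on the boundary. The rest is the standard Wold-decomposition bookkeeping.
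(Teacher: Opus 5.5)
The paper does not prove this statement. It quotes it as the Beurling--Lax theorem (Theorem~3.1.7 of the cited book), so your argument is a self-contained substitute rather than a reconstruction of the paper's route. Your main line is the standard Halmos wandering-subspace proof, and it is correct:
\begin{enumerate}
\item Purity of $S|_K$ gives $K=\bigoplus_{n\ge0}S^nL$ with $L=K\ominus SK$.
\item The relations $\langle S^nx_i,x_j\rangle=\delta_{n0}\delta_{ij}$ for $n\ge0$, together with their conjugates, say that the $L^1$ function $\langle x_i(e^{i\omega}),x_j(e^{i\omega})\rangle_{\CC^m}$ has the Fourier coefficients of the constant $\delta_{ij}$. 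Hence it equals $\delta_{ij}$ a.e.
\item This one computation gives both $\dim L\le m$ and that $\Phi=[x_1,\dots,x_r]$ is inner. The columns of $\Phi$ are $H^2$ functions with bounded boundary values, hence lie in $H^\infty$.
\item The map $c\mapsto\Phi c=\sum_{n,i}\hat c_i(n)S^nx_i$ is an isometry of $H^2(\DD,\CC^r)$ onto $K$.
\end{enumerate}
Compared with the bare citation, your route identifies $r=\dim(K\ominus SK)$ explicitly and shows the columns of $\Phi$ form an orthonormal basis of the wandering subspace. That is useful concrete information when the paper then passes to $M=K^\perp$.

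You should, however, delete the first half of your second step, because it is false. The claim that $x\mapsto x(0)$ is injective on $L$ fails, as does the equivalent claim that $x\in L$, $x(0)=0$, $x=Sy$ forces $y\in K$. Take $K=zH^2(\DD,\CC^m)$: then $SK=z^2H^2(\DD,\CC^m)$ and $L=z\CC^m$, so every element of $L$ vanishes at $0$. With $m=1$, the vector $b=1$ is orthogonal to $K$, yet $\langle Sb,z\rangle=1$, so the orthogonality $Sb\perp L$ you intended to check also fails. Nothing downstream depends on this detour, since the a.e.\ orthonormality of boundary values already yields $r\le m$. Once it is removed, the proof stands.

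You also rightly assume $K$ is closed. The statement leaves this implicit, but it is necessary, since $\Phi H^2(\DD,\CC^r)$ is always closed.
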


That is, the subspaces  of $H^2(\DD)$ invariant under $S^m$ have the form $J^{-1}K$, where $K$ is
given in Theorem~\ref{thm:kinvsub}.
We next have that $\rang U^* = (\ker U)^\perp =: M$, a space invariant under $(S^*)^m$ and that $U:M \to H$
is an isomorphism.
Arguments similar to those in \cite{CHP20} now show that $A$  is similar to the compressed shift
$P_M(S^m_{|M})$. The simplest case is $M=H^2$, which makes $A$ similar to $S^m$.

Under the transformations given, we wish to find vectors corresponding to $u_1,\ldots,u_m$.
In the case $K=H^2(\DD,\CC^m)$
these will be $v_j= JU^{-1}u_j= e_{j}$, for $j=1,\ldots,m$, the constant function
that takes the value $(0,\ldots,0,1,0, \ldots 0)$ with the $1$ in the $j$th place.
In the more general case $K=\Phi H^2(\DD,\CC^r)$
it will be the projection of $v_j$ onto $K^\perp$, namely the vectorial reproducing kernel $k^M_{j,0}$
such that $\langle (f_1,\ldots,f_m),k^M_{j,0} \rangle = f_j(0)$ for
$(f_1,\ldots,f_m) \in M$. Once more these are Parseval frames.

\end{document}